\documentclass[12pt]{article}
\usepackage{amssymb}
\usepackage{amsmath}
\usepackage{amsthm}
\usepackage{color}
\usepackage{comment}
\usepackage{geometry}
\usepackage{graphicx}

\makeatletter
	\@addtoreset{equation}{section}
\makeatother

\let\originalleft\left
\let\originalright\right
\renewcommand{\left}{\mathopen{}\mathclose\bgroup\originalleft}
\renewcommand{\right}{\aftergroup\egroup\originalright}

\begin{document}

\newcommand{\bO}{{\bf 0}}
\newcommand\cA{\mathcal{A}}
\newcommand\cF{\mathcal{F}}
\newcommand\cL{\mathcal{L}}
\newcommand\cN{\mathcal{N}}
\newcommand\cO{\mathcal{O}}
\newcommand\cT{\mathcal{T}}
\newcommand{\ee}{\varepsilon}
\newcommand\rD{\mathrm{D}}
\newcommand\re{\mathrm{e}}
\newcommand\ri{\mathrm{i}}

\newcommand{\myStep}[2]{{\bf Step #1} --- #2\\}

\newtheorem{theorem}{Theorem}[section]
\newtheorem{corollary}[theorem]{Corollary}
\newtheorem{lemma}[theorem]{Lemma}
\newtheorem{proposition}[theorem]{Proposition}

\theoremstyle{definition}
\newtheorem{definition}{Definition}[section]
\newtheorem{example}[definition]{Example}

\theoremstyle{remark}
\newtheorem{remark}{Remark}[section]

\title{Refracting Filippov systems: sliding dynamics without sliding regions.}
\author{
D.J.W.~Simpson\\\\
School of Mathematical and Computational Sciences\\
Massey University\\
Palmerston North, 4410\\
New Zealand
}

\maketitle

\begin{abstract}

This paper develops fundamental mathematical theory for refracting Filippov systems. These are discontinuous ordinary differential equations with solutions defined in the sense of Filippov, and whose first Lie derivatives vary continuously across discontinuity surfaces. Unlike generic Filippov systems, discontinuity surfaces consist only of crossing regions and their boundaries where both adjacent vector fields are tangent to the discontinuity surface. Crossing orbits spiral around invisible-invisible tangency surfaces, and we derive a formula for the attractive or repulsive strength of these surfaces. We prove crossing orbits cannot converge to tangency surfaces in finite time (no Zeno), and that the limiting dynamics consists of Filippov solutions on the tangency surfaces (second-order sliding motion). We derive a vector field that governs this motion, and characterise the stability of equilibria on tangency surfaces. The methodology is applied to a model of a mechanical oscillator with compliant impacts, and a model of ant colony migration. We also relate refracting Filippov systems to second-order sliding mode control, and show that for two-dimensional systems the results reduce to known theory.

\end{abstract}

\section{Introduction}
\label{sec:intro}

Many physical systems switch between distinct modes of operation,
and are well modelled by discontinuous, piecewise-smooth, ordinary differential equations.
The functional form of the equations changes on {\em discontinuity surfaces}
that consist of crossing regions, attracting sliding regions, and repelling sliding regions,
as determined by the relative direction of the vector field on each side of the surface.
When an orbit of the system reaches a sliding region,
it may then evolve along this region (sliding motion)
where it represents an average of the two adjacent modes of operation, Fig.~\ref{fig:A}a.

\begin{figure}
\begin{center}
\includegraphics[width=12cm]{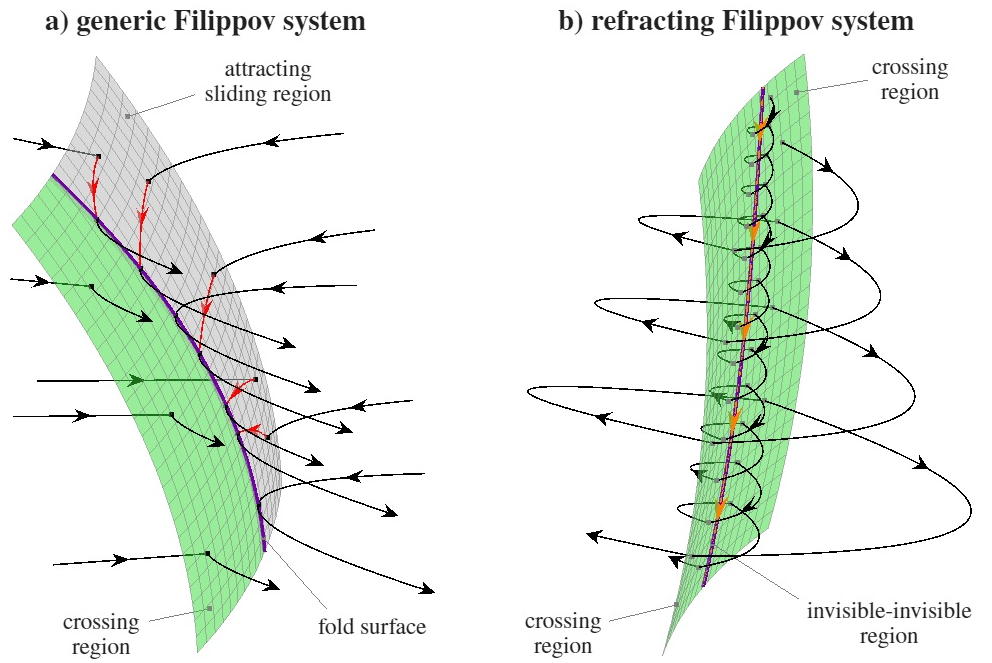}
\caption{
Phase portraits of Filippov systems of the form \eqref{eq:f}.
The discontinuity surface $\Sigma$ is coloured green for crossing regions and grey for attracting sliding regions.
Throughout this paper sliding orbits on sliding regions are coloured red,
as in panel (a), while sliding orbits on tangency surfaces are coloured orange, as in panel (b).
\label{fig:A}
} 
\end{center}
\end{figure}

Discontinuity surfaces may have no sliding regions, as in Fig.~\ref{fig:A}b.
In this example, orbits spiral around the boundary between two crossing regions.
This boundary is where the direction of both vector fields relative to the discontinuity surface change sign.
This is a highly degenerate scenario for piecewise-smooth systems
having no {\em a priori} relationship between the two adjacent vector fields.
However, this scenario arises naturally in models of mechanical systems with compliant impacts \cite{Br99,BlCz99},
and is present in models of diverse applications, including ant colonies \cite{WaQi24},
glacial cycles \cite{PaPa04},
and the progression of diseases subject to intermittent treatment \cite{TaXi12,ViMa20}.

To explain how the situation in Fig.~\ref{fig:A}b may arise in a model, consider a two-piece system
\begin{equation}
\dot{x} = \begin{cases}
f^L(x), & H(x) < 0, \\
f^R(x), & H(x) > 0,
\end{cases}
\label{eq:f}
\end{equation}
with domain $\Omega \subset \mathbb{R}^n$ and discontinuity surface
\begin{equation}
\Sigma = \left\{ x \in \Omega \,\middle|\, H(x) = 0 \right\}.
\label{eq:discSurface}
\end{equation}
Crossing regions are subsets of $\Sigma$
for which the first Lie derivatives
\begin{align*}
\cL_{f^L} H(x) &= \nabla H(x)^{\sf T} f^L(x), &
\cL_{f^R} H(x) &= \nabla H(x)^{\sf T} f^R(x),
\end{align*}
have the same sign, while sliding regions are where the derivatives have opposite sign.
In generic situations, these signs change on two different {\em tangency surfaces}.
But in many models, $f^L$ and $f^R$ share many of the same expressions,
e.g.~one is identical to the another except incorporates a control strategy.
That is, $f^R(x) = f^L(x) + g(x)$, where $g$ is relatively simple.
Then if $\nabla H(x)^{\sf T} g(x) = 0$ throughout $\Sigma$,
then $\cL_{f^L} H(x)$ and $\cL_{f^R} H(x)$ are identical on $\Sigma$,
and so their signs change on the same tangency surface, as in Fig.~\ref{fig:A}b.


With a nod to optics, such systems are termed {\em refracting}.
This nomenclature appears to have been introduced by Teixeira \cite{Te81}
who studied the local structural stability of two-dimensional refracting systems.
This work was generalised to three dimensions by Buzzi {\em et al.}~\cite{BuMe13},
and recently to $n$-dimensions by Siller and Teixeira \cite{SiTe26}.
The results show that a point on a discontinuity surface
having quadratically tangent trajectories for each piece of the system
only generates a structural instability if it is a non-hyperbolic equilibrium.
Several research groups have studied two-dimensional, two-piece, piecewise-linear refracting systems
with the aim of numerating limit cycles and characterising phase portraits \cite{PoRo18,LiLl20,LiLi21b,ShLi21}.
In particular, Li {\em et al}.~\cite{LiLi21} prove that
such systems admit at most one limit cycle.

We use Filippov's convention \cite{Fi88} to define orbits of refracting systems,
and in this way no new rules are required define motion along $\Sigma$.
Such motion necessarily occurs along tangency surfaces.
Since these surfaces are codimension-two,
this motion will be referred to as {\em second-order sliding motion}.

The purpose of this paper is to establish the basic elements for a qualitative theory 
of refracting Filippov systems in any number of dimensions.
It is interesting that applied examples
have not initiated the development of a mathematical framework specific to such systems until now.
Possibly this is because the framework is not essential if a careful analysis of the dynamics is not performed,
or spiralling motion around a tangency surface does not occur.
Also, the framework is not needed if the system is two-dimensional,
for in this case tangency surfaces are points and second-order sliding motion is trivial.
Indeed this work is part of a project to advance the bifurcation theory of piecewise-smooth systems beyond two dimensions.

The remainder of this paper is organised as follows.
Section \ref{sec:F} reviews the basic principles of Filippov systems,
including the sliding vector field, pseudo-equilibria, and tangency points.
Tangency points are classified as visible or invisible
by the signs of the second Lie derivatives, $\cL^2_{f^L} H(x)$ and $\cL^2_{f^R} H(x)$.

In \S\ref{sec:cts} we consider refracting systems
and classify tangency surfaces by the visibility of their points.
We also introduce the second-order sliding vector field $f^T$.
This vector field is derived from Filippov's convention and governs the evolution of orbits on tangency surfaces.
The second-order sliding vector field has a simple functional form because it assumes $\cL^2_{f^L} H(x) \ne \cL^2_{f^R} H(x)$.
Arbitrary order tangencies were treated by Carvalho {\em et al.}~\cite{CaNo24},
and we show that their vector field reduces to ours in the case $\cL^2_{f^L} H(x) \ne \cL^2_{f^R} H(x)$.

For an invisible-invisible tangency surface $\cT$, 
orbits dwell near $\cT$ because there are no visible tangent trajectories to eject orbits from the the vicinity of $\cT$.
Typical orbits spiral around $\cT$, as in Fig.~\ref{fig:A}b,
and in \S\ref{sec:crossingDynamics} we establish the leading-order asymptotics for this motion.
The asymptotics show that orbits move toward $\cT$ if
\begin{equation}
\Lambda = \frac{\cL_{f^L}^3 H}{\big( \cL_{f^L}^2 H \big)^2} - \frac{\cL_{f^R}^3 H}{\big( \cL_{f^R}^2 H \big)^2}
\label{eq:Lambda}
\end{equation}
is negative, and away from $\cT$ if this quantity is positive, Theorem \ref{th:P}.
Geometrically, $\Lambda$ can be intepreted as the difference
between the rate of change of the radius of curvature
of trajectories through $\cT$ for the two pieces of the system
relative to the discontinuity surface.

In \S\ref{sec:noZeno} we prove that spiralling motion cannot reach $\cT$ and transition to second-order sliding motion in finite time, Theorem \ref{th:noZeno}.
This contrasts generic Filippov systems for which orbits reach attracting sliding regions and immediately switch to sliding motion.
It also contrasts hybrid dynamical systems for which infinitely many switches can occur in finite time (the Zeno phenomenon) \cite{JoEg99,NoDa11,ZhJo01}.
Many hybrid models of vibro-impacting mechanical systems exhibit sticking motion along a codimension-two tangency surface,
which is analogous to second-order sliding motion, but can be reached via infinitely switches in finite time \cite{ChVa16,NoPi09}.

By Filippov \cite[Thm.~2, \S 8]{Fi88},
spiralling motion is a continuous perturbation of second-order sliding motion.
That is, second-order sliding motion, as defined through Filippov's convention,
provides the correct way of smoothly averaging the rapidly switching spiralling motion.
This property allows us to use the second-order sliding vector field
to predict how spiralling motion moves about the tangency surface.
In \S\ref{sec:noZeno} we also prove that the size of the continuous perturbation admits a linear bound, Theorem \ref{th:consistency}.

In \S\ref{sec:pseq} we consider equilibria of the second-order sliding vector field.
These are {\em pseudo-equilibria} of the system, and we characterise their stability, Theorem \ref{th:stab2}.
In \S\ref{sec:impacts} and \S\ref{sec:ants} we use the new concepts
to better understand the dynamics of a vibro-impact model, and an ant colony migration model.
In \S\ref{sec:2d} we consider two-dimensional refracting systems.
Here tangency surfaces are points
and we show that the criterion for the stability of such points established in previous works \cite{CoGa01,Si22b}
reduces to the sign of $\Lambda$ in our setting.
In \S\ref{sec:control} we contrast refracting systems with second-order sliding mode control \cite{ShEd14}.
Finally, \S\ref{sec:conc} provides conclusions and an outlook for future work.

\section{Preliminaries}
\label{sec:F}

In this section we define Filippov solutions
and other fundamental aspects of piecewise-smooth ODEs that will be needed in later sections.
For more details on these topics refer to Filippov \cite{Fi88} or the textbooks \cite{DiBu08,Je18b}.

For simplicity, this paper only treats systems of the form \eqref{eq:f}
which admit a single discontinuity surface $\Sigma$.
For systems with more pieces and/or more discontinuity surfaces,
one simply applies the theory given here to subsets of phase space
on which the system has two pieces and one discontinuity surface.
The following definition contains the assumptions we wish to place upon the components of \eqref{eq:f}.

\begin{definition}
A system \eqref{eq:f} is {\em piecewise-$C^k$} ($k \ge 0$) on an open set $\Omega \subset \mathbb{R}^n$ if
$f^L : \Omega \to \mathbb{R}^n$ is $C^k$,
$f^R : \Omega \to \mathbb{R}^n$ is $C^k$, and
$H : \Omega \to \mathbb{R}$ is $C^{k+1}$ with $\nabla H(x) \ne \bO$ for all $x \in \Sigma$.
\end{definition}

In most models, $f^L$, $f^R$, and $H$ are analytic, and the value of $k$ can be taken to be arbitrarily large.
The extra degree of differentiability of $H$ ensures that the sliding vector field (defined below) is $C^k$.
The condition $\nabla H(x) \ne \bO$ ensures that $\Sigma$ is a $C^{k+1}$ codimension-one manifold
by the regular value theorem \cite{Hi76}.

\subsection{Filippov solutions}

Filippov \cite{Fi88} defines solutions for general classes of discontinuous ODEs.
For ODEs of the form \eqref{eq:f}, Filippov's definition can be stated as follows.

\begin{definition}
A {\em Filippov solution} to \eqref{eq:f} is an absolutely continuous function $\phi(t)$
satisfying $\dot{\phi}(t) \in \cF(\phi(t))$ for almost all $t$, where $\cF$ is the set-valued function
\begin{equation}
\cF(x) = \begin{cases}
\left\{ f^L(x) \right\}, & H(x) < 0, \\
\left\{ (1-s) f^L(x) + s f^R(x) \,\middle|\, 0 \le s \le 1 \right\}, & H(x) = 0, \\
\left\{ f^R(x) \right\}, & H(x) > 0.
\end{cases}
\label{eq:cF}
\end{equation}
\end{definition}

Throughout this paper we consider Filippov solutions to \eqref{eq:f},
that is, we treat \eqref{eq:f} as a {\em Filippov system}.

\begin{definition}
An {\em equilibrium} of \eqref{eq:f} is a point $x^* \in \Omega$ for which $\bO \in \cF(x^*)$.
\label{df:eq}
\end{definition}

If $x^* \in \Omega$ is an equilibrium of \eqref{eq:f},
then the constant function $\phi(t) = x^*$ is a Filippov solution to \eqref{eq:f}.
The following definition extends the standard notions of stability to Filippov systems
by incorporating the possible non-uniqueness of forward evolution.

\begin{definition}
An equilibrium $x^* \in \Omega$ of \eqref{eq:f} is
\begin{enumerate}
\item 
{\em Lyapunov stable} if for every neighbourhood $\cN_1 \subset \Omega$ of $x^*$
there exists a neighbourhood $\cN_2 \subset \cN_1$ of $x^*$
such that for every Filippov solution $\phi(t)$ that satisfies $\phi(0) \in \cN_2$,
we have $\phi(t) \in \cN_1$ for all $t \ge 0$,
\item
{\em asymptotically stable} if it is Lyapunov stable and there exists a neighbourhood
$\cN_3$ of $x^*$ such that $\phi(t) \to x^*$ as $t \to \infty$ for every Filippov solution $\phi(t)$ with $\phi(0) \in \cN_3$, and
\item
{\em unstable} if it is not Lyapunov stable.
\end{enumerate}
\label{df:stability}
\end{definition}

\subsection{Division of the discontinuity surface}

As described in \S\ref{sec:intro}, the Lie derivatives of $f^L$ and $f^R$ with respect to $H$
capture the behaviour of orbits relative to the discontinuity surface $\Sigma$.
For our purposes, Lie derivatives are defined as follows (for details and extensions see \cite{Ya20}).
Let $f$ be a smooth vector field, and let $\varphi_t(x)$ be the flow induced by $\dot{x} = f(x)$.
Also let $h$ be a smooth scalar function.
Then for any $m \ge 0$ the $m^{\rm th}$ {\em Lie derivative} of $h$ with respect to $f$ is
\begin{equation}
\cL_f^m h(x) = \frac{\partial^m}{\partial t^m} \,h(\varphi_t(x)) \Big|_{t=0}.
\label{eq:LieDeriv}
\end{equation}
In applications, Lie derivatives are usually evaluated directly from the vector field via the recurrence relation
\begin{equation}
\cL_f^m h(x) = \left( \nabla \cL_f^{m-1} h(x) \right)^{\sf T} f(x),
\label{eq:LieDeriv2}
\end{equation}
valid for all $m \ge 1$.

The first Lie derivatives, $\cL_{f^L} H(x)$ and $\cL_{f^R} H(x)$,
give the speed and direction of orbits relative to $\Sigma$.
From these we define the following regions, illustrated in Fig.~\ref{fig:B} for a two-dimensional system.

\begin{definition}
Consider a piecewise-$C^0$ system \eqref{eq:f}.
A subset $S \subset \Sigma$ is
\begin{enumerate}
\item
a {\em crossing region} if $\cL_{f^L} H(x) \cL_{f^R} H(x) > 0$ for all $x \in S$,
\item
an {\em attracting sliding region} if $\cL_{f^L} H(x) > 0$ and $\cL_{f^R} H(x) < 0$ for all $x \in S$, and
\item
a {\em repelling sliding region} if $\cL_{f^L} H(x) < 0$ and $\cL_{f^R} H(x) > 0$ for all $x \in S$.
\end{enumerate}
\end{definition}

\begin{figure}
\begin{center}
\includegraphics[width=10cm]{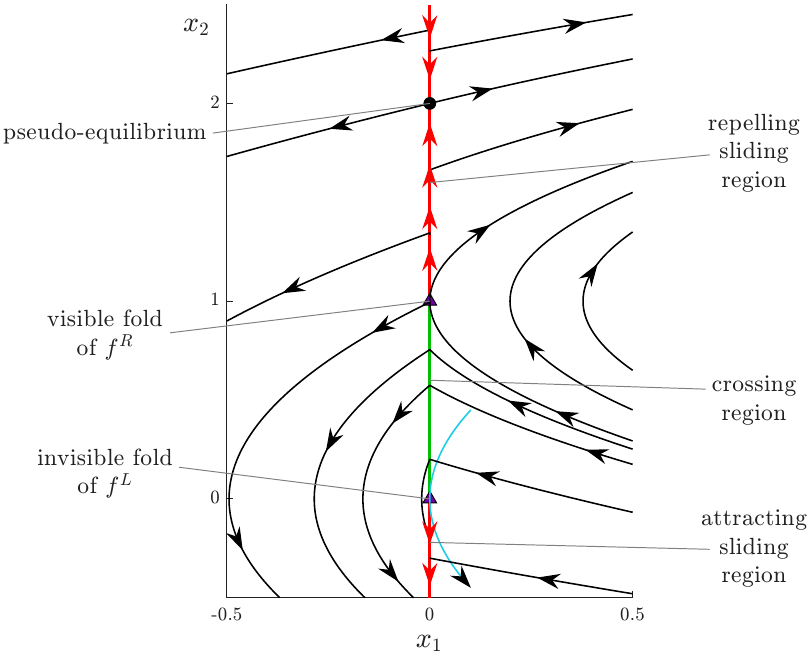}
\caption{
A phase portrait of \eqref{eq:f} with \eqref{eq:B}.
The blue trajectory (virtual) is the orbit of $\dot{x} = f^L(x)$
passing through the invisible fold of $f^L$.
\label{fig:B}
} 
\end{center}
\end{figure}

\subsection{Tangency points}

A point $x \in \Sigma$ for which $\cL_{f^L} H(x) = 0$ or $\cL_{f^R} H(x) = 0$ is a {\em tangency point}.
The collection of these form the {\em tangency surfaces}
\begin{align}
\cT_L &= \left\{ x \in \Sigma \,\middle|\, \cL_{f^L} H(x) = 0 \right\}, \\
\cT_R &= \left\{ x \in \Sigma \,\middle|\, \cL_{f^R} H(x) = 0 \right\}.
\label{eq:TLTR}
\end{align}
If
\begin{equation}
{\rm rank} \left( \begin{bmatrix} \nabla H(x) & \nabla \cL_{f^L} H(x) \end{bmatrix} \right) = 2,
\label{eq:rankTwoL}
\end{equation}
for all $x \in \cT_L$, then $\cT_L$ is a $C^k$ codimension-two manifold by the regular value theorem \cite{Hi76}.
Similarly if
\begin{equation}
{\rm rank} \left( \begin{bmatrix} \nabla H(x) & \nabla \cL_{f^R} H(x) \end{bmatrix} \right) = 2,
\label{eq:rankTwoR}
\end{equation}
for all $x \in \cT_R$, then $\cT_R$ is a $C^k$ codimension-two manifold.

To classify the local behaviour of orbits as they pass through tangency points, we look to the second Lie derivatives of $f^L$ and $f^R$.
For any $x \in \cT_L$, if $\cL_{f^L}^2 H(x) \ne 0$
then the orbit of $f^L$ through $x$ has a quadratic tangency to $\Sigma$ at $x$, and we refer to $x$ as a {\em fold}.
If $\cL_{f^L}^2 H(x) < 0$, then locally this orbit belongs to the half-space $H \le 0$,
so is a Filippov solution to \eqref{eq:f}, and we say $x$ is {\em visible}.
If instead $\cL_{f^L}^2 H(x) > 0$, the orbit locally belongs to $H \ge 0$,
so is not a Filippov solution to \eqref{eq:f}, and we say $x$ is {\em invisible}.
For $f^R$, visibility is determined from $\cL_{f^R}^2 H(x)$ but with the opposite sign.
Formally we have the following definition.

\begin{definition}
Consider a piecewise-$C^1$ system \eqref{eq:f}.
A point $x \in \cT_L$ [$\cT_R$] is 
\begin{enumerate}
\item 
a {\em visible fold} if $\cL_{f^L}^2 H(x) < 0$ [$\cL_{f^R}^2 H(x) > 0$], and
\item
an {\em invisible fold} if $\cL_{f^L}^2 H(x) > 0$ [$\cL_{f^R}^2 H(x) < 0$].
\end{enumerate}
\label{df:fold}
\end{definition}

\begin{example}
Consider \eqref{eq:f} with $x = (x_1,x_2)$ and
\begin{align}
f^L(x) &= \begin{bmatrix} -x_2 \\ -1 \end{bmatrix}, &
f^R(x) &= \begin{bmatrix} 2 x_2 - 2 \\ 1 \end{bmatrix}, &
H(x) &= x_1 \,,
\label{eq:B}
\end{align}
as in Fig.~\ref{fig:B}.
By \eqref{eq:LieDeriv2} with $m=1$,
we have $\cL_{f^L} H(x) = -x_2$ and $\cL_{f^R} H(x) = 2 x_2 - 2$.
By solving $\cL_{f^L} H(x) = 0$ and $\cL_{f^R} H(x) = 0$,
we find that $x = (0,0)$ is a tangency point of $f^L$, and $x = (0,1)$ is a tangency point of $f^R$.
By \eqref{eq:LieDeriv2} with $m=2$,
we have $\cL_{f^L}^2 H(x) = 1$ and $\cL_{f^R}^2 H(x) = 2$.
Thus $(0,0)$ is an invisible fold, and $(0,1)$ is a visible fold,
as evident in Fig.~\ref{fig:B}.
\label{exm:B}
\end{example}

\subsection{The sliding vector field}

The middle component of \eqref{eq:cF} is the set of all convex combinations
\begin{equation}
F(x,s) = (1-s) f^L(x) + s f^R(x).
\label{eq:F}
\end{equation}
While a Filippov solution is constrained to $\Sigma$,
its velocity is of the form $F(x,s)$, and is tangent to $\Sigma$.
So this velocity has $s$ such that $\cL_{F(x,s)} H(x) = 0$.
We have
\begin{equation}
\cL_{F(x,s)} H(x) = (1-s) \cL_{f^L} H(x) + s \cL_{f^R} H(x),
\nonumber
\end{equation}
so if $\cL_{f^L} H(x) \ne \cL_{f^R} H(x)$, then $F(x,s)$ is tangent to $\Sigma$ with
\begin{equation}
s = \frac{\cL_{f^L} H(x)}{\cL_{f^L} H(x) - \cL_{f^R} H(x)}.
\label{eq:s}
\end{equation}
By substituting \eqref{eq:s} into \eqref{eq:F}, we obtain
\begin{equation}
f^S(x) = \frac{f^R(x) \cL_{f^L} H(x) - f^L(x) \cL_{f^R} H(x)}{\cL_{f^L} H(x) - \cL_{f^R} H(x)},
\label{eq:fS}
\end{equation}
known as the {\em sliding vector field}.
On sliding regions, we have $s \in (0,1)$, and thus Filippov solutions $\phi(t)$ obey $\dot{\phi}(t) = f^S(\phi(t))$.

\begin{definition}
A {\em pseudo-equilibrium} of \eqref{eq:f} is a point $x^* \in \Sigma$
for which $\cL_{f^L} H(x^*) \ne \cL_{f^R} H(x^*)$ and $f^S(x^*) = \bO$.
Moreover, $x^*$ is
\begin{enumerate}
\item
{\em admissible} if $x^*$ belongs to a sliding region (attracting or repelling), and
\item
{\em virtual} if $x^*$ belongs to a crossing region.
\end{enumerate}
\label{df:admissiblePsEq}
\end{definition}

If $x^*$ is an admissible pseudo-equilibrium,
then it is an equilibrium of \eqref{eq:f} in the sense of Definition \ref{df:eq}.
If $x^*$ is a virtual pseudo-equilibrium,
then it is not equilibrium of \eqref{eq:f}
because Filippov solutions do not obey $f^S$ on crossing regions.

In applications, sliding dynamics are usually analysed by working with
the restriction of $f^S(x)$ to $\Sigma$.
The restriction is an $(n-1)$-dimensional vector field
that we write as $\tilde{f}^S(\tilde{x})$,
where $\tilde{x} \in \mathbb{R}^{n-1}$ is a coordinate system on $\Sigma$.
In many models $\Sigma$ is linear, and it is easy to identify suitable coordinates $\tilde{x}$.
If $\Sigma$ is nonlinear, such coordinates exist locally.

The following result characterises the stability of pseudo-equilibria
by applying smooth dynamical systems theory to sliding motion,
and taking into account the attracting/repelling nature of the sliding region.
We write $\tilde{x}^* \in \mathbb{R}^{n-1}$ for $x^* \in \mathbb{R}^n$ in $\tilde{x}$-coordinates.

\begin{theorem}
Let $x^* \in \Sigma$ be an admissible pseudo-equilibrium of a piecewise-$C^1$ system \eqref{eq:f}.
\begin{enumerate}
\item
If $x^*$ belongs to an attracting sliding region and all eigenvalues of $\rD \tilde{f}^S \left( \tilde{x}^* \right)$
have negative real-part, then $x^*$ is an asymptotically stable equilibrium of \eqref{eq:f}.
\item
If $x^*$ belongs to a repelling sliding region,
or $\rD \tilde{f}^S \left( \tilde{x}^* \right)$ has an eigenvalue with positive real-part,
then $x^*$ is an unstable equilibrium of \eqref{eq:f}.
\end{enumerate}
\label{th:stab}
\end{theorem}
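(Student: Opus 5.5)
We work in local coordinates adapted to $\Sigma$ near $x^*$. Since $\nabla H(x^*) \ne \bO$, there is a $C^1$ change of coordinates $x \mapsto (\tilde{x}, y)$ with $y = H(x)$, so that $\Sigma$ becomes $\{y = 0\}$ and $\tilde{x}$ parametrises $\Sigma$. In these coordinates the $y$-components of $f^L$ and $f^R$ are $\cL_{f^L} H$ and $\cL_{f^R} H$. At an admissible pseudo-equilibrium these are nonzero with opposite signs, so by continuity they keep fixed, opposite signs on a neighbourhood $\cN$ of $x^*$. Hence $\Sigma \cap \cN$ is entirely an attracting or entirely a repelling sliding region. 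There, every Filippov solution staying in $\Sigma$ has velocity $f^S$, because \eqref{eq:s} is the unique admissible $s$.

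\textbf{Part 1 (attracting, Hurwitz linearisation).} We first show that every Filippov solution starting in $\cN$ reaches $\Sigma$ and then stays there while in $\cN$. Off $\Sigma$, $\dot{y} = \cL_{f^L}H \ge c>0$ for $y<0$ and $\dot{y} = \cL_{f^R}H \le -c<0$ for $y>0$. Thus $|y|$ decreases at rate at least $c$ and reaches zero in time at most $|y(0)|/c$. Once on $\Sigma$, the solution cannot leave: leaving into $y>0$, say, would require $\dot y>0$ on a set of positive measure while $y>0$, which is impossible.

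For the sliding dynamics we use the fact that $\tilde{f}^S$ is $C^1$ with a Hurwitz Jacobian at $\tilde{x}^*$. Smooth theory then gives a quadratic Lyapunov function $V(\tilde{x})$ with $\dot V<0$ near $\tilde{x}^*$. To combine the two phases, use $W(\tilde x,y) = V(\tilde x) + K|y|$. While $y \ne 0$, $\dot{V}$ is bounded by $C(|\tilde{x}-\tilde{x}^*| + |y|)$-type terms, whereas $K|y|$ decreases at rate $Kc$. The simplest control of the pre-sliding phase is to note that it lasts at most $|y(0)|/c$ and that velocities are bounded, so $\tilde{x}$ drifts by $O(|y(0)|)$. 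Choosing $\cN_2$ small therefore lands the solution on $\Sigma$ inside a sublevel set of $V$ contained in $\cN_1$. That set is forward invariant under $\tilde{f}^S$, and solutions in it converge to $\tilde{x}^*$. This gives Lyapunov stability and asymptotic stability.

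\textbf{Part 2 (instability).} If the region is repelling, the solution through $x^*$ that immediately follows $f^L$ into $y<0$ is a valid Filippov solution: since $\cL_{f^L}H(x^*)<0$, the orbit of $f^L$ from points of $\Sigma$ arbitrarily close to $x^*$ enters $H<0$. Along it, $|y|$ grows at rate at least $c$ while it stays in $\cN$, so it leaves any small neighbourhood $\cN_1$. This holds for initial points arbitrarily close to $x^*$ (indeed from $x^*$ itself), so stability fails under Definition~\ref{df:stability}, which quantifies over every Filippov solution. If instead $\rD\tilde{f}^S(\tilde{x}^*)$ has an eigenvalue with positive real part, then $\tilde{x}^*$ is unstable for the $C^1$ flow of $\tilde{f}^S$. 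For example, by the unstable manifold theorem there are sliding solutions starting arbitrarily near $\tilde{x}^*$ that exit a fixed neighbourhood. These sliding orbits are genuine Filippov solutions because $s\in(0,1)$ on the sliding region, so $x^*$ is unstable.

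The main obstacle is the bookkeeping in Part 1: uniformly controlling the transient before a solution reaches $\Sigma$, so that it enters $\Sigma$ within the basin of the sliding Lyapunov function. The linear bound on the drift of $\tilde{x}$ in terms of $|y(0)|$ is what I would use first.
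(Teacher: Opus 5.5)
Your proposal is correct and follows essentially the same route as the paper. In Part 1, stability of $\tilde{x}^*$ for $\tilde{f}^S$ gives a neighbourhood on $\Sigma$, and the attracting sliding region forces every nearby Filippov solution onto $\Sigma$ in finite time; your $O(|y(0)|)$ drift bound and no-escape-from-$\Sigma$ argument make precise what the paper simply asserts through $\cN_3$. In Part 2, instability comes from the $f^L$ orbit leaving $x^*$ in the repelling case, and from instability of the sliding flow otherwise. The auxiliary function $W = V + K|y|$ is unnecessary, since the drift bound alone does the job.
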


\begin{proof}~
\begin{enumerate}
\item
By the eigenvalue assumption, $\tilde{x}^*$ is an asymptotically stable equilibrium of
the smooth system $\dot{\tilde{x}} = \tilde{f}^S(\tilde{x})$.
Thus for any neighbourhood $\cN_1 \subset \Omega$ of $x^*$,
there exists a neighbourhood $\cN_2 \subset \cN_1$ of $x^*$ such that the forward orbit $\phi(t)$ of every $x \in \cN_2 \cap \Sigma$
satisfies $\phi(t) \in \cN_1 \cap \Sigma$ for all $t \ge 0$, and $\phi(t) \to x^*$ as $t \to \infty$.
Since $x^*$ belongs to an attracting sliding region and $\cL_{f^L} H$ and $\cL_{f^R} H$ are continuous,
there exists a neighbourhood $\cN_3 \subset \cN_2$ of $x^*$
such that the forward orbit of every $x \in \cN_3$ reaches $\cN_2$ in finite time.
These orbits converge to $x^*$ without exiting $\cN_1$, thus $x^*$ is asymptotically stable.
\item
If $x^*$ belongs to a repelling sliding region,
the forward orbit of $x^*$ under $f^L$ is a Filippov solution that emanates from $x^*$, thus $x^*$ is unstable.
If $\rD \tilde{f}^S \left( \tilde{x}^* \right)$ has an eigenvalue with positive real-part,
then $\tilde{x}^*$ is unstable for $\dot{\tilde{x}} = \tilde{f}^S(\tilde{x})$,
thus $x^*$ is unstable for \eqref{eq:f}.
\end{enumerate}
\end{proof}

\begin{example}
Here we compute and analyse pseudo-equilibria of \eqref{eq:f} with \eqref{eq:B}, continuing from Example \ref{exm:B}.
By evaluating \eqref{eq:fS}, we obtain
\begin{equation}
f^S(x) = \frac{1}{2 - 3 x_2} \begin{bmatrix} 0 \\ x_2 - 2 \end{bmatrix},
\label{eq:fSexmB}
\end{equation}
which is well-defined for $x_2 \ne \frac{2}{3}$.
The vector field \eqref{eq:fSexmB} vanishes at $x^* = (0,2)$,
which belongs to a sliding region, thus this point is an admissible pseudo-equilibrium of \eqref{eq:f}.
Using $x_2$ for the coordinate $\tilde{x}$,
we have $\tilde{f}^S(x_2) = \frac{x_2 - 2}{2 - 3 x_2}$.
The derivative of $\tilde{f}^S$ at $x^*$
is $\rD \tilde{f}^S(2) = -\frac{1}{4}$.
This value is negative, so nearby sliding orbits converge to $x^*$, as indicated in Fig.~\ref{fig:B}.
However, $x^*$ is unstable because it belongs to a repelling sliding region.
\label{exm:Bcontinued}
\end{example}

\section{Refracting systems}
\label{sec:cts}

\begin{definition}
A piecewise-$C^0$ system \eqref{eq:f} on $\Omega \subset \mathbb{R}^n$ is {\em refracting} if
$\cL_{f^L} H(x) = \cL_{f^R} H(x)$ for all $x \in \Sigma$.
\label{df:secondOrder}
\end{definition}

For a refracting system, the left and right tangency surfaces \eqref{eq:TLTR} are identical.
Hence it is convenient to drop the subscripts and write
\begin{equation}
\cT = \cT_L = \cT_R \,.
\label{eq:coincidentFolds}
\end{equation}
For brevity, we write $V(x) = \cL_{f^L} H(x)$ for all $x \in \Omega$, so
\begin{equation}
V(x) = \cL_{f^L} H(x) = \cL_{f^R} H(x),
\label{eq:V}
\end{equation}
for all $x \in \Sigma$.
By \eqref{eq:LieDeriv2},
\begin{equation}
\nabla V(x)^{\sf T} f^L(x) = \cL_{f^L}^2 H(x).
\label{eq:dVL}
\end{equation}
As shown in Appendix \ref{app:dVR}, we also have
\begin{equation}
\nabla V(x)^{\sf T} f^R(x) = \cL_{f^R}^2 H(x),
\label{eq:dVR}
\end{equation}
for all $x \in \cT$ as a consequence of \eqref{eq:V}.
By \eqref{eq:rankTwoL} and \eqref{eq:rankTwoR}, the condition
\begin{equation}
{\rm rank} \left( \begin{bmatrix} \nabla H(x) & \nabla V(x) \end{bmatrix} \right) = 2,
\label{eq:rankTwoCombined}
\end{equation}
ensures that the tangency surface $\cT$ is a smooth codimension-two manifold.
With the following definition we distinguish subsets of $\cT$ by the visibility of its points.

\begin{definition}
Consider a refracting piecewise-$C^1$ system \eqref{eq:f}.
A subset $S \subset \cT$ is
\begin{enumerate}
\item 
a {\em visible-visible region} if $\cL_{f^L}^2 H(x) < 0$ and $\cL_{f^R}^2 H(x) > 0$ for all $x \in S$,
\item
a {\em visible-invisible region} if $\cL_{f^L}^2 H(x) \cL_{f^R}^2 H(x) > 0$ for all $x \in S$, and
\item
an {\em invisible-invisible region} if $\cL_{f^L}^2 H(x) >0$ and $\cL_{f^R}^2 H(x) < 0$ for all $x \in S$.
\end{enumerate}
\label{df:region}
\end{definition}

A visible-invisible region is either visible for $f^L$ and invisible for $f^R$,
or visible for $f^R$ and invisible for $f^L$.
The latter case occurs in Fig.~\ref{fig:C}.
Notice that the three types of
regions are bounded by points at which $\cL_{f^L}^2 H(x) = 0$ or $\cL_{f^R}^2 H(x) = 0$.
For a boundary point $x$ at which $\cL_{f^L}^2 H(x) = 0$ and $\cL_{f^L}^3 H(x) \ne 0$
[$\cL_{f^R}^2 H(x) = 0$ and $\cL_{f^R}^3 H(x) \ne 0$],
the orbit of $f^L$ [$f^R$] through $x$ is cubically tangent to $\Sigma$.

\begin{figure}
\begin{center}
\includegraphics[width=8cm]{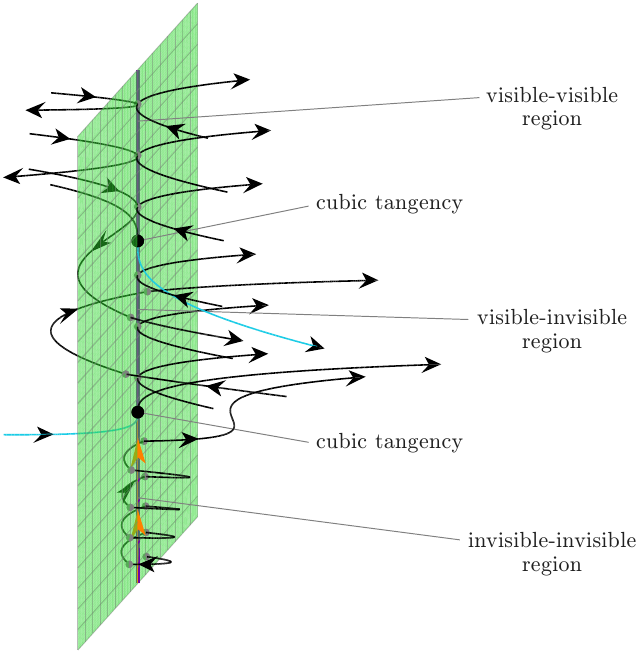}
\caption{
A phase portrait of a three-dimensional refracting system
illustrating the three types of regions defined in Definition \ref{df:region}.
This figure was drawn using \eqref{eq:C}; the coordinate axes are omitted for clarity.
The blue trajectories are the virtual parts of the cubically tangent orbits
of $\dot{x} = f^L(x)$ and $\dot{x} = f^R(x)$.
\label{fig:C}
} 
\end{center}
\end{figure}

We now construct a vector field for Filippov solutions of \eqref{eq:f} on $\cT$.
We cannot use the sliding vector field \eqref{eq:fS} because
the condition $\cL_{f^L} H(x) \ne \cL_{f^R} H(x)$ is not satisfied on $\cT$.
Instead, observe that for $x \in \cT$ any convex combination $F(x,s)$,
given by \eqref{eq:F}, is tangent to $\Sigma$.
Thus in order for $F(x,s)$ to be tangent to $\cT$, we only require $\cL_{F(x,s)} V(x) = 0$.
By \eqref{eq:F} and \eqref{eq:dVR},
\begin{equation}
\cL_{F(x,s)} V(x) = (1-s) \cL_{f^L}^2 H(x) + s \cL_{f^R}^2 H(x).
\label{eq:s2pre}
\end{equation}
So if $\cL_{f^L}^2 H(x) \ne \cL_{f^R}^2 H(x)$, then
$F(x,s)$ is tangent to $\cT$ with
\begin{equation}
s = \frac{\cL_{f^L}^2 H(x)}{\cL_{f^L}^2 H(x) - \cL_{f^R}^2 H(x)}.
\label{eq:s2}
\end{equation}
By substituting \eqref{eq:s2} into \eqref{eq:F} we obtain the {\em second-order sliding vector field}
\begin{equation}
f^T(x) = \frac{f^R(x) \cL_{f^L}^2 H(x) - f^L(x) \cL_{f^R}^2 H(x)}{\cL_{f^L}^2 H(x) - \cL_{f^R}^2 H(x)}.
\label{eq:fT}
\end{equation}

If \eqref{eq:f} is piecewise-$C^k$, then $f^T$ is $C^{k-1}$.
On visible-visible and invisible-invisible regions,
we have $\cL_{f^L}^2 H(x) \cL_{f^L}^2 H(x) < 0$,
and thus $s \in (0,1)$.
In this case $f^T(x) \in \cF(x)$,
therefore orbits may slide along visible-visible and invisible-invisible regions,
and while doing so obey $\dot{x} = f^T(x)$.
On visible-invisible regions, no member of $\cF(x)$ is tangent to $\cT$,
thus orbits cannot slide along these regions.

Finally in this section we relate $f^T$ to the sliding vector field of Carvalho {\em et al.}~\cite{CaNo24}.
Carvalho {\em et al.}~considered sliding motion of a system \eqref{eq:f}
on a smooth codimension-$m$ surface $M = \left\{ x \in \mathbb{R}^n \,\middle|\, \eta(x) = \bO \right\}$.
They smoothed the system
and showed that in the nonsmooth limit orbits on $M$ obey the {\em tangential sliding vector field}
\begin{equation}
f^{\rm tan}(x) = \frac{1-\lambda^*}{2} \,f^R(x) + \frac{1+\lambda^*}{2} \,f^L(x),
\label{eq:tsvf}
\end{equation}
where
\begin{equation}
\lambda^*(x) = \frac{\| d \eta(x) f^R(x) \| - \| d \eta(x) f^L(x) \|}{\| d \eta(x) f^R(x) \| - \| d \eta(x) f^R(x) \|}.
\label{eq:lambdaStar}
\end{equation}
We take $\eta = \left( H, V \right)$, so then $M = \cT$.

\begin{lemma}
Consider a refracting piecewise-$C^k$ system \eqref{eq:f},
and let $\eta(x) = \left( H(x), V(x) \right)$.
Then $f^{\rm tan}(x) = f^T(x)$ at any $x \in \cT$
belonging to a visible-visible or invisible-invisible region.
\label{le:CaNo24}
\end{lemma}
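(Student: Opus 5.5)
The plan is a direct computation. I will evaluate $d\eta(x) f^L(x)$ and $d\eta(x) f^R(x)$ at a point $x \in \cT$, substitute them into \eqref{eq:lambdaStar}, and compare the resulting coefficients in \eqref{eq:tsvf} with those in \eqref{eq:fT}.

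\textbf{Step 1: the two components of $d\eta$.} Since $\eta = (H,V)$, the derivative $d\eta(x)$ is the $2 \times n$ matrix with rows $\nabla H(x)^{\sf T}$ and $\nabla V(x)^{\sf T}$. Hence
\[
d\eta(x) f^L(x) = \begin{bmatrix} \cL_{f^L} H(x) \\ \nabla V(x)^{\sf T} f^L(x) \end{bmatrix},
\qquad
d\eta(x) f^R(x) = \begin{bmatrix} \cL_{f^R} H(x) \\ \nabla V(x)^{\sf T} f^R(x) \end{bmatrix}.
\]
On $\cT$ both first components vanish, by \eqref{eq:V} and the definition of $\cT$. By \eqref{eq:dVL} and \eqref{eq:dVR}, which is valid on $\cT$, the second components are $a := \cL_{f^L}^2 H(x)$ and $b := \cL_{f^R}^2 H(x)$. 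Therefore $\| d\eta(x) f^L(x) \| = |a|$ and $\| d\eta(x) f^R(x) \| = |b|$.

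\textbf{Step 2: fixing the formula for $\lambda^*$.} The denominator of \eqref{eq:lambdaStar} as printed is $\| d\eta f^R \| - \| d\eta f^R \| = 0$, which is evidently a typographical slip. I will read it as $\| d\eta f^R \| + \| d\eta f^L \|$, as in Carvalho {\em et al.}~\cite{CaNo24}. Justifying this reading is the only delicate point in the argument. With it,
\[
\lambda^* = \frac{|b| - |a|}{|a| + |b|}.
\]
This is well defined because on visible-visible and invisible-invisible regions both $a$ and $b$ are nonzero. The coefficients in \eqref{eq:tsvf} then become
\[
\frac{1-\lambda^*}{2} = \frac{|a|}{|a|+|b|},
\qquad
\frac{1+\lambda^*}{2} = \frac{|b|}{|a|+|b|}.
\]

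\textbf{Step 3: the sign cases.} The coefficients of $f^R$ and $f^L$ in \eqref{eq:fT} are $\frac{a}{a-b}$ and $\frac{-b}{a-b}$. I will split into the two region types of Definition~\ref{df:region}.
\begin{itemize}
\item On an invisible-invisible region, $a>0$ and $b<0$. Then $|a| = a$, $|b| = -b$ and $|a|+|b| = a-b$, so the coefficients match directly.
\item On a visible-visible region, $a<0$ and $b>0$. Then $|a| = -a$, $|b| = b$ and $|a|+|b| = b-a$. Multiplying numerator and denominator by $-1$ gives $\frac{|a|}{|a|+|b|} = \frac{a}{a-b}$ and $\frac{|b|}{|a|+|b|} = \frac{-b}{a-b}$.
\end{itemize}
In both cases $f^{\rm tan}(x) = f^T(x)$.

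This computation also shows why the hypothesis is needed. On a visible-invisible region $a$ and $b$ have the same sign, so $|a|+|b| = |a-b|$ fails. In that case the absolute values do not reduce to the signed expressions, and the identity does not hold.
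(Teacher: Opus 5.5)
Your proposal is correct and follows the paper's proof essentially line for line. You evaluate $d\eta f^L$ and $d\eta f^R$ on $\cT$ via \eqref{eq:dVL}--\eqref{eq:dVR}, substitute into $\lambda^*$ with the denominator read as a sum (exactly as the paper implicitly does), and match the weights against \eqref{eq:s2} in the two sign cases; the only cosmetic difference is that you compare both coefficients directly rather than the single convex parameter $s = \frac{1-\lambda^*}{2}$.
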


\begin{proof}
By \eqref{eq:dVL} and \eqref{eq:dVR}, if $x \in \cT$ then
\begin{align*}
d \eta(x) f^L(x) &= \begin{bmatrix}
\nabla H(x)^{\sf T} f^L(x) \\
\nabla V(x)^{\sf T} f^L(x)
\end{bmatrix}
= \begin{bmatrix} 0 \\ \cL_{f^L}^2 H(x) \end{bmatrix}, \\
d \eta(x) f^R(x) &= \begin{bmatrix}
\nabla H(x)^{\sf T} f^R(x) \\
\nabla V(x)^{\sf T} f^R(x)
\end{bmatrix}
= \begin{bmatrix} 0 \\ \cL_{f^R}^2 H(x) \end{bmatrix}.
\end{align*}
By substituting these into \eqref{eq:lambdaStar} we obtain
\begin{equation}
\lambda^* = \frac{\big| \cL_{f^R}^2 H(x) \big| - \big| \cL_{f^L}^2 H(x) \big|}
{\big| \cL_{f^R}^2 H(x) \big| + \big| \cL_{f^L}^2 H(x) \big|}.
\nonumber
\end{equation}
By \eqref{eq:tsvf}, $f^{\rm tan}(x) = F(x,s) = (1-s) f^L(x) + s f^R(x)$ where
\begin{equation}
s = \frac{1-\lambda^*}{2}
= \frac{\big| \cL_{f^L}^2 H(x) \big|}{\big| \cL_{f^L}^2 H(x) \big| + \big| \cL_{f^R}^2 H(x) \big|}.
\label{eq:s3}
\end{equation}
If $x$ belongs to a visible-visible region,
then $\cL_{f^L}^2 H(x) < 0$ and $\cL_{f^R}^2 H(x) > 0$, so \eqref{eq:s3} is equivalent to \eqref{eq:s2},
while if $x$ belongs to an invisible-invisible region,
then $\cL_{f^L}^2 H(x) > 0$ and $\cL_{f^R}^2 H(x) < 0$, and again \eqref{eq:s3} is equivalent to \eqref{eq:s2}.
In either case, $f^{\rm tan}(x) = F(x,s)$, where $s$ is given by \eqref{eq:s2},
so $f^{\rm tan}(x) = f^T(x)$.
\end{proof}

\section{Spiralling motion about an invisible-invisible tangency surface}
\label{sec:crossingDynamics}

In this section we analyse orbits near invisible-invisible tangency surfaces.
To do this we perform asymptotic calculations of the first return maps $P_L$ and $P_R$ illustrated in Fig.~\ref{fig:D},
and defined as follows.
Consider a refracting system \eqref{eq:f}.
Given $x \in \Sigma$ with $V(x) > 0$,
let $P_R(x)$ denote the next point in the forward orbit of $x$ under $f^R$ that belongs to $\Sigma$,
assuming this orbit ever returns to $\Sigma$,
and let $\tau_R(x)$ denote the corresponding evolution time.
Similarly, given $x \in \Sigma$ with $V(x) < 0$,
let $P_L(x)$ denote the next point in the forward orbit of $x$ under $f^L$ that belongs to $\Sigma$,
assuming this orbit ever returns to $\Sigma$,
and let $\tau_L(x)$ denote the corresponding evolution time.
Also let
\begin{equation}
P = P_L \circ P_R \,,
\label{eq:Pdefn}
\end{equation}
and 
\begin{equation}
\tau = \tau_L \circ P_R + \tau_R \,,
\label{eq:Tdefn}
\end{equation}
be the total evolution time.

\begin{theorem}
Consider a refracting piecewise-$C^3$ system \eqref{eq:f}
and let $S \subset \cT$ be a compact invisible-invisible region.
There exists a neighbourhood $\cN \subset \Omega$ of $S$
such that $P$ is well-defined for all $x \in \cN \cap \Sigma$ with $V(x) = \nu > 0$, and
\begin{align}
\tau(x) &= 2 \left( \frac{1}{\cL_{f^L}^2 H(x)} - \frac{1}{\cL_{f^R}^2 H(x)} \right) \nu + \cO \left( \nu^2 \right), \label{eq:T} \\
P(x) &= x + 2 \left( \frac{f^L(x)}{\cL_{f^L}^2 H(x)} - \frac{f^R(x)}{\cL_{f^R}^2 H(x)} \right) \nu + \cO \left( \nu^2 \right), \label{eq:P} \\
V(P(x)) &= \nu + \frac{2 \Lambda(x) \nu^2}{3} + \cO \left( \nu^3 \right), \label{eq:VP}
\end{align}
where
\begin{equation}
\Lambda = \frac{\cL_{f^L}^3 H}{\big( \cL_{f^L}^2 H \big)^2} - \frac{\cL_{f^R}^3 H}{\big( \cL_{f^R}^2 H \big)^2},
\label{eq:Lambda2}
\end{equation}
repeating \eqref{eq:Lambda}.
\label{th:P}
\end{theorem}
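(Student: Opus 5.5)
The plan is to compute each half-map $P_R$ and $P_L$ separately by Taylor expansion in time along the smooth flows, and then compose them. Fix $x\in\cN\cap\Sigma$ with $V(x)=\nu>0$, and write $a_R=\cL_{f^R}^2 H(x)<0$, $b_R=\cL_{f^R}^3 H(x)$, $a_L=\cL_{f^L}^2 H(x)>0$, $b_L=\cL_{f^L}^3 H(x)$. Let $\varphi^R_t$ denote the flow of $f^R$.

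\textbf{Regularity.} By \eqref{eq:LieDeriv} and \eqref{eq:LieDeriv2}, $t\mapsto H(\varphi^R_t(x))$ has derivatives $\cL_{f^R}^m H(\varphi^R_t(x))$. In the piecewise-$C^3$ setting $H$ is $C^4$, so $\cL_{f^R}^3 H$ is $C^1$. Hence
\begin{align*}
H(\varphi^R_t(x)) &= \nu t + \tfrac{a_R}{2}t^2 + \tfrac{b_R}{6}t^3 + \cO(t^4), \\
\cL_{f^R}H(\varphi^R_t(x)) &= \nu + a_R t + \tfrac{b_R}{2}t^2 + \cO(t^3),
\end{align*}
where the first line uses $\cL_{f^R}H(x)=V(x)=\nu$ on $\Sigma$. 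The remainders are uniform for $x$ near the compact set $S$.

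\textbf{Existence of the return time.} Rescale $t=\nu\sigma$ and divide the first expansion by $\nu^2$. This gives $G(\sigma,\nu,x)=\sigma+\tfrac{a_R}{2}\sigma^2+\nu(\cdots)=0$. At $\nu=0$ its nonzero root is $\sigma_0=-2/a_R>0$, and $\partial_\sigma G=-1\neq 0$ there. The implicit function theorem, applied uniformly over the compact $S$ (shrinking $\cN$ as needed), gives a unique return time near $\sigma_0\nu$, and it is $C^1$.

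One must also check that no earlier zero of $H$ occurs for $t\in(0,\tau_R)$. For small $\nu$ this follows because $\nu t+\tfrac{a_R}{2}t^2$ is positive on that interval with a margin of order $\nu^2$. Iterating $t=-\tfrac{2\nu}{a_R}-\tfrac{b_R t^2}{3a_R}+\cO(t^3)$ once yields
\[
\tau_R(x)=-\frac{2\nu}{a_R}-\frac{4b_R\nu^2}{3a_R^3}+\cO(\nu^3).
\]

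\textbf{Value of $V$ after the first half-map.} The endpoint $P_R(x)$ lies in $\Sigma$, where $V=\cL_{f^R}H$. Substituting $\tau_R$ into the second expansion gives
\[
V(P_R(x))=-\nu+\frac{2b_R}{3a_R^2}\nu^2+\cO(\nu^3).
\]
It matters that $V$ is evaluated on $\Sigma$ here: this is where the refracting property \eqref{eq:V} is used, since off $\Sigma$ the two first Lie derivatives differ. In addition, $P_R(x)=x+f^R(x)\tau_R+\cO(\nu^2)=x-\tfrac{2f^R(x)}{a_R}\nu+\cO(\nu^2)$.

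\textbf{Second half-map and composition.} The same computation for $f^L$, starting at $y=P_R(x)$ with $V(y)=-\nu'$, gives $\tau_L(y)=\tfrac{2\nu'}{a_L(y)}+\cO(\nu'^2)$ and
\[
V(P_L(y))=\nu'+\frac{2b_L(y)}{3a_L(y)^2}\nu'^2+\cO(\nu'^3).
\]
Here the signs flip because $a_L>0$ and the starting value is negative. Now substitute $\nu'=\nu-\tfrac{2b_R}{3a_R^2}\nu^2+\cO(\nu^3)$. Since $y-x=\cO(\nu)$ and $a_L,b_L$ are $C^1$, replacing $a_L(y),b_L(y)$ by their values at $x$ costs only $\cO(\nu^3)$ in the $\nu'^2$ term. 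The $\nu^2$ terms then combine to $\tfrac23\bigl(\tfrac{b_L}{a_L^2}-\tfrac{b_R}{a_R^2}\bigr)=\tfrac23\Lambda(x)$, which is \eqref{eq:VP}. Adding the two leading-order times gives \eqref{eq:T} via \eqref{eq:Tdefn}, and adding the two leading displacements $f^R\tau_R$ and $f^L\tau_L$, using $f^L(y)=f^L(x)+\cO(\nu)$, gives \eqref{eq:P}.

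\textbf{Main obstacle.} The hard step is keeping the error in \eqref{eq:VP} at $\cO(\nu^3)$. The $\nu^2$ coefficient arises only from a cancellation: the $-\nu$ term is returned to $+\nu$, and the second-order corrections must be tracked exactly. This requires the second-order term of $\tau_R$, uniform Taylor remainders (which is where the $C^3$ hypothesis enters), and careful control of the base-point shift from $x$ to $P_R(x)$ in the second half-map.
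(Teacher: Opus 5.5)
Your proposal is correct and follows essentially the same route as the paper: Taylor-expand $H$ along each flow, get the return time to second order via the implicit function theorem, evaluate $\cL_{f^R}H = V$ at the landing point on $\Sigma$, and then compose the two half-maps. The differences are cosmetic. You apply the implicit function theorem after the rescaling $t=\nu\sigma$, whereas the paper uses $J(t,x)=H(\varphi_t(x))/t$ at points of $S$. You also make explicit the first-return check and the $\cO(\nu^3)$ cost of moving the base point from $x$ to $P_R(x)$, both of which the paper leaves implicit.
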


Theorem \ref{th:P} is proved in Appendix \ref{app:crossing} by direct matched asymptotics.
Here we consider consequences of equation \eqref{eq:VP}
(consequences of \eqref{eq:T} and \eqref{eq:P} are described in \S\ref{sec:noZeno}).
By \eqref{eq:VP}, if $\Lambda(x) < 0$, then $V(P(x)) < V(x)$.
So upon completing one revolution about the tangency surface $\cT$,
the orbit has moved to a point with a smaller value of $V$.
Repeated revolutions give $V \to 0$, so the orbit converges to $\cT$, assuming $\Lambda < 0$ is maintained.
If instead $\Lambda > 0$, the orbit spirals outward from $\cT$.
Consequently we introduce the following definition.

\begin{definition}	
Consider a refracting piecewise-$C^3$ system \eqref{eq:f}.
A subset $S \subset \cT$ of an invisible-invisible region is
\begin{enumerate}
\item 
{\em attracting} if $\Lambda(x) < 0$ for all $x \in S$, and
\item
{\em repelling} if $\Lambda(x) > 0$ for all $x \in S$.
\end{enumerate}
\label{df:attrRep}
\end{definition}

\begin{figure}
\begin{center}
\includegraphics[width=5.6cm]{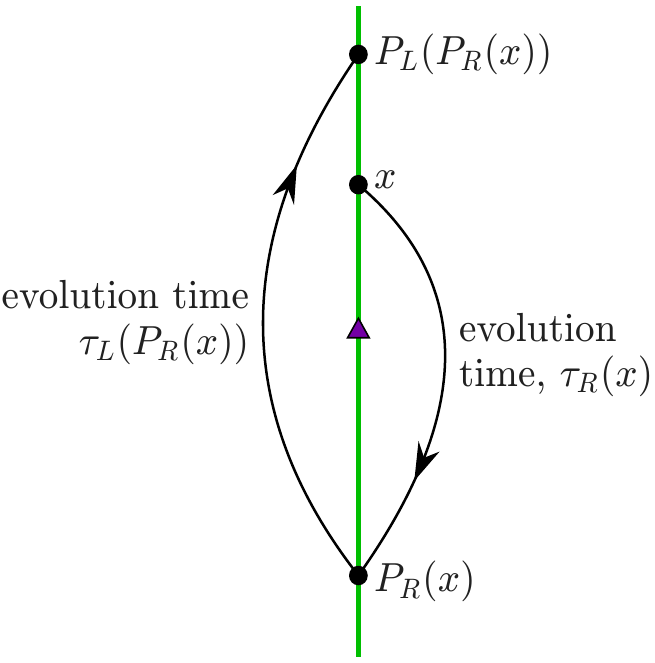}
\caption{
A sketch illustrating the first return maps $P_L$ and $P_R$.
\label{fig:D}
} 
\end{center}
\end{figure}

\begin{example}
Suppose
\begin{align}
f^L(x) &= \begin{bmatrix} x_2 \\ 1 - x_3 \\ \frac{1}{2} \end{bmatrix}, &
f^R(x) &= \begin{bmatrix} x_2 \\ x_3 \\ \frac{1}{5} \end{bmatrix}, &
H(x) &= x_1 \,,
\label{eq:C}
\end{align}
as in Fig.~\ref{fig:C}.
In this case \eqref{eq:f} is refracting and the $x_3$-axis is a tangency surface $\cT$
(coloured purple in Fig.~\ref{fig:C}).
From \eqref{eq:LieDeriv2},
\begin{align*}
\cL_{f^L} H(x) &= x_2 \,, & \cL_{f^R} H(x) &= x_2 \,, \\
\cL_{f^L}^2 H(x) &= 1-x_3 \,, & \cL_{f^R}^2 H(x) &= x_3 \,, \\
\cL_{f^L}^3 H(x) &= \tfrac{1}{2}, & \cL_{f^R}^3 H(x) &= \tfrac{1}{5}.
\end{align*}
So by Definition \ref{df:region} the $x_3$-axis is a visible-visible region for $x_3 > 1$,
a visible-invisible region for $0 < x_3 < 1$,
and an invisible-invisible region for $x_3 < 0$.

By evaluating the second-order sliding vector field \eqref{eq:fT} on the $x_3$-axis, we obtain
\begin{equation}
f^T(x) = \frac{1}{1 - 2 x_3} \begin{bmatrix} 0 \\ 0 \\ \frac{1}{5} - \frac{7}{10} x_3 \end{bmatrix},
\nonumber
\end{equation}
which is well-defined for $x_3 \ne \frac{1}{2}$.
On the invisible-invisible region, the third component of $f^T$ is positive,
so second-order sliding orbits on this region are directed upwards.
Upon reaching the cubic tangency $x = (0,0,0)$,
such orbits switch to regular motion in the right half-space $x_1 > 0$
following the cubically tangent trajectory.

Observe
\begin{equation}
\Lambda(x) = -\frac{1}{2 (1 - x_3)^2} - \frac{1}{5 x_3^2}
\nonumber
\end{equation}
is negative.
Thus orbits near the invisible-invisible region converge to this region as they spiral around it.
However, this convergence is slow compared to the broad upwards motion.
For this reason, the spiralling orbit shown toward the bottom of
Fig.~\ref{fig:C} is hardly seen to move closer to the invisible-invisible region.
After four revolutions the orbit enters the right half-space and fails to return to $\Sigma$.
\end{example}

\section{Convergence in infinite time and consistency of the flow}
\label{sec:noZeno}

In this section we show that second-order sliding orbits
are continuous perturbations of spiralling orbits
about invisible-invisible subsets of the tangency surface $\cT$ (Theorem \ref{th:consistency}).
We also show that spiralling orbits cannot converge to $\cT$ in finite time (Theorem \ref{th:noZeno}).
This type of `no Zeno' result has been described previously for discontinuous, piecewise-linear ODEs \cite{ThCa14},
and continuous, piecewise-analytic ODEs \cite{Su82}.

We first use \eqref{eq:T} and \eqref{eq:P}
to show with a quick calculation that second-order sliding motion and spiralling motion
agree to first order over one revolution.
Let
\begin{equation}
\beta(x) = 2 \left( \frac{1}{\cL_{f^L}^2 H(x)} - \frac{1}{\cL_{f^R}^2 H(x)} \right)
\label{eq:beta}
\end{equation}
be the coefficient of the $\nu$-term in \eqref{eq:T}.
Now observe
\begin{align}
\beta(x) f^T(x) &= 2 \left( \frac{1}{\cL_{f^L}^2 H(x)} - \frac{1}{\cL_{f^R}^2 H(x)} \right)
\left( \frac{f^R(x) \cL_{f^L}^2 H(x) - f^L(x) \cL_{f^R}^2 H(x)}{\cL_{f^L}^2 H(x) - \cL_{f^R}^2 H(x)} \right) \nonumber \\
&= 2 \left( \frac{f^L(x)}{\cL_{f^L}^2 H(x)} - \frac{f^R(x)}{\cL_{f^R}^2 H(x)} \right),
\label{eq:betafT}
\end{align}
is the coefficient of the $\nu$-term in \eqref{eq:P}.
Thus mapping under $P$ is identical, at first order, to evolving under $f^T$ by the time $\tau$.

By applying this approximation several times,
we can conclude that several revolutions around $\cT$
is identical, at first order, to evolving along $\cT$ by a suitable length of time.
The following result generalises this observation to
accommodate orbits arbitrarily close to $\cT$ for which the number of revolutions over a fixed time is unbounded.
Specifically, we obtain the linear bound \eqref{eq:bound},
where $\phi(t)$ is the spiralling orbit and $\xi(t)$ is the second-order sliding orbit, see Fig.~\ref{fig:E}.
Filippov \cite[Thm.~2, \S 8]{Fi88} considers a more general setting and shows $\phi(t) \to \xi(t)$ as $x \to y$.

\begin{theorem}
Consider a refracting piecewise-$C^3$ system \eqref{eq:f}
and let $S \subset \cT$ be a compact invisible-invisible region.
Let $b > 0$ and suppose $S_0 \subset S$ has the property that
the forward orbit of every point in $S_0$ under $f^T$ remains in $S$ for all $0 \le t \le b$.
Then there exist $\delta > 0$ and $M > 0$ such that for all
$y \in S_0$ and $x \in \Omega \setminus \cT$ with $\| x - y \| < \delta$,
\begin{equation}
\| \phi(t) - \xi(t) \| \le M \| x - y \|,
\label{eq:bound}
\end{equation}
for all $0 \le t \le b$,
where $\phi(t)$ is the forward orbit of $x$ under \eqref{eq:f},
and $\xi(t)$ is the forward orbit of $y$ under $f^T$.
\label{th:consistency}
\end{theorem}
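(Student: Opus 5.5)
The strategy is to compare the spiralling orbit $\phi$ with the sliding orbit $\xi$ only at its successive returns to $\Sigma \cap \{V>0\}$, using Theorem~\ref{th:P}, and then to interpolate between returns. First, reduce to the case $x \in \Sigma$ with $V(x) = \nu > 0$. For a general $x \notin \cT$ near $S$, the orbit of $x$ under $f^L$ or $f^R$ reaches $\Sigma$ within time $\cO(\|x-y\|)$. This follows because $V = \cO(\|x-y\|)$ and $H=\cO(\|x-y\|)$ near $\cT$, and the invisible folds force the orbit to arc back to $\Sigma$ (or first hit $\Sigma$ transversally) within a time bounded by a constant times the distance to $\cT$. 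The displacement over this time is also $\cO(\|x-y\|)$, and after at most one application of $P_L$ or $P_R$ we are at a point $x_0 \in \Sigma$ with $V(x_0)=\nu_0>0$ and $\|x_0 - y\| \le C\|x-y\|$.

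Next, iterate $P$. Set $x_{j+1} = P(x_j)$, $\nu_j = V(x_j)$, and $t_{j+1} = t_j + \tau(x_j)$. By \eqref{eq:VP} and compactness of $S$ (so that $|\Lambda| \le K$ and all $\cO$-constants are uniform on a neighbourhood $\cN$), we get $\nu_{j+1} \le \nu_j(1 + K'\nu_j)$. By \eqref{eq:T}, $\tau(x_j) \ge c\,\nu_j$ with $c>0$, since $\beta>0$ on invisible-invisible regions. Hence $\nu_{j+1} - \nu_j \le K''\nu_j\,(t_{j+1}-t_j)$. A discrete Gr\"onwall argument gives $\nu_j \le \nu_0 \re^{K'' t_j}$, and therefore $\nu_j \le C\nu_0 \le C'\|x-y\|$ for all $t_j \le b$. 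This keeps the orbit inside $\cN$ as long as the base point stays near $S$, which will be verified simultaneously by continuation.

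Now compare with $\xi$. By \eqref{eq:P} and \eqref{eq:betafT}, $x_{j+1} = x_j + \tau(x_j) f^T(x_j) + \cO(\nu_j^2)$. The $\cO(\nu^2)$ discrepancy between $\tau$ and $\beta\nu$ contributes only another $\cO(\nu_j^2)$, since $f^T$ is bounded. Extend $f^T$ to a $C^1$ (hence locally Lipschitz, constant $L$) vector field on $\cN$, for instance via a tubular-neighbourhood projection onto $\cT$. The projection introduces an error $\cO(\nu_j)\cdot\tau_j = \cO(\nu_j^2)$, since $x_j$ lies within $\cO(\nu_j)$ of $\cT$; this is the point I would check carefully. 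Then $x_{j+1}$ is one Euler step of $f^T$ with local error $\cO(\nu_j^2) = \cO(\nu_j \tau(x_j))$. The standard global error estimate for Euler's method with Lipschitz constant $L$ gives
\[
\|x_j - \xi(t_j)\| \le \re^{Lb}\Big(\|x_0-y\| + C\sum_k \nu_k\,\tau(x_k)\Big) \le \re^{Lb}\big(\|x_0-y\| + C b\max_k\nu_k\big),
\]
which is $\le M\|x-y\|$ by the previous step. Between returns, $\|\phi(t) - x_j\| \le \sup\|f\|\,\tau(x_j) = \cO(\nu_j)$ and $\|\xi(t)-\xi(t_j)\| = \cO(\nu_j)$, so \eqref{eq:bound} holds for all $t\in[0,b]$. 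Choosing $\delta$ small enough that $M\delta$ is less than the distance from the $f^T$-orbits of $S_0$ to $\partial\cN$ closes the continuation argument.

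The main obstacle is the uniformity in the middle steps. Theorem~\ref{th:P} is stated pointwise with $\cO(\nu^2)$ remainders, and I need these remainders bounded by $C\nu^2$ uniformly over $\cN\cap\Sigma$, including base points slightly off $\cT$ rather than exactly on $S$. I would first revisit the appendix asymptotics to confirm the constants depend only on $C^3$ bounds of $f^L$, $f^R$, $H$ and lower bounds on $|\cL^2 H|$ over the compact set. Then I would combine this with the Gr\"onwall bound so that the unbounded number of revolutions, roughly $b/\nu$ of them, contributes only a summed error of order $\nu$.
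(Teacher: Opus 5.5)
\textbf{Main case.} Your treatment of $x \in \Sigma$ with $V(x)>0$ is essentially the paper's proof. It has the same ingredients: uniform versions of \eqref{eq:T}--\eqref{eq:VP} near $S$; a discrete Gr\"onwall bound $V(x^{(j)}) \le C\re^{Ct_j}\|x-y\|$; an Euler-type recursion $E(t_{j+1}) \le (1+L(t_{j+1}-t_j))E(t_j) + C V(x^{(j)})^2$; interpolation between returns; and a continuation argument to stay near $S$. The tubular projection is unnecessary, because \eqref{eq:fT} already defines $f^T$ off $\cT$ and \eqref{eq:P} evaluates it at $x$ itself. That is what the paper uses.

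\textbf{The gap: reducing from $x \notin \Sigma$.} Near a fold the flow is parabolic. For $f = f^L$ or $f^R$,
$H(\varphi_t(x)) = H(x) + \cL_f H(x)\,t + \tfrac12\cL_f^2 H(x)\,t^2 + \cO(t^3)$.
So if $|H(x)| = h$ and $\cL_f H(x) \approx 0$, the orbit needs time $\approx \sqrt{2h/|\cL_f^2 H|}$ to reach $\Sigma$. It arrives with $|V| \approx \sqrt{2h|\cL_f^2 H|}$. Hence $t_0$, $\nu_0$ and $\|x_0-y\|$ can be of order $\sqrt{\|x-y\|}$, not $\cO(\|x-y\|)$.

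No argument can repair this step, because \eqref{eq:bound} is false off $\Sigma$. Take $H = x_1$, $f^L = (x_2,1,1)^{\sf T}$, $f^R = (x_2,-1,1)^{\sf T}$. This system is refracting, $\cT$ is the $x_3$-axis and is invisible-invisible everywhere, and $f^T = (0,0,1)^{\sf T}$ on $\cT$. Let $x = (h,0,0)$ and $y = (0,0,0)$. The orbit follows $f^R$ and reaches $\Sigma$ at $t = \sqrt{2h}$, at the point $(0,-\sqrt{2h},\sqrt{2h})$. Meanwhile $\xi(\sqrt{2h}) = (0,0,\sqrt{2h})$. So $\|\phi(t)-\xi(t)\|/\|x-y\| = \sqrt{2/h}$, which is unbounded as $h \to 0$.

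The paper's proof handles this case with the same claim (``easily extended \ldots because the forward orbits of such points quickly reach $\Sigma^+$''). The defect is therefore in the statement itself, not only in your proposal.

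What is true, and what your argument proves, is \eqref{eq:bound} for $x \in \Sigma \setminus \cT$. If $V(x) < 0$, one application of $P_L$ costs time and displacement $\cO(|V(x)|) = \cO(\|x-y\|)$. For general $x \in \Omega\setminus\cT$ you get a bound $M\sqrt{\|x-y\|}$ instead. This weaker version is all that Lemma \ref{le:noEscape} and Theorem \ref{th:stab2} actually use.
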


Theorem \ref{th:consistency} is proved in Appendix \ref{app:consistency}
by showing that the errors produced by repeated applications
of the above first-order approximation do not accumulate unfavourably.
Conceptually, the proof is similar that commonly used to bound
the global truncation error of the forward Euler numerical method \cite{At89}.

The next result shows that infinitely many switches cannot occur in finite time.

\begin{theorem}
Consider a refracting piecewise-$C^3$ system \eqref{eq:f} on $\Omega \subset \mathbb{R}^n$.
Suppose the forward [backward] orbit of a point $x \in \Omega \setminus \cT$ intersects $\cT$ in finite time.
Then its first point of intersection with $\cT$ does not belong to an invisible-invisible region.
\label{th:noZeno}
\end{theorem}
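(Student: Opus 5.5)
Argue by contradiction, using the asymptotics of Theorem \ref{th:P}. Suppose the forward orbit $\phi(t)$ of $x \in \Omega \setminus \cT$ first meets $\cT$ at time $t^* < \infty$, at a point $y = \phi(t^*)$ in an invisible-invisible region. Fix a compact invisible-invisible neighbourhood $S \subset \cT$ of $y$, and let $\cN$ be the neighbourhood of $S$ given by Theorem \ref{th:P}.

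The first step is to show that, for $t$ slightly less than $t^*$, the orbit must be spiralling. On $\Omega \setminus \Sigma$ the orbit follows the smooth flows of $f^L$ or $f^R$. At points of $\Sigma \setminus \cT$ near $y$, $V \ne 0$, so the region is crossing and the orbit passes transversally through $\Sigma$. A trajectory of $f^L$ or $f^R$ alone cannot reach an invisible fold from the open half-space where that vector field applies: near $y$ the quadratic tangency places the $f^L$-trajectory through a fold locally in $H \ge 0$, and the $f^R$-trajectory in $H \le 0$. The same holds for nearby folds in $S$. Hence near $t^*$ the orbit switches between pieces, and for $t < t^*$ close to $t^*$ it lies in $\cN$ with infinitely many crossings of $\Sigma$ accumulating at $t^*$. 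The orbit cannot slide on $\cT$ before $t^*$, because $t^*$ is the first intersection.

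The second step is to extract a sequence of crossing points $x_j \in \Sigma$ with $V(x_j) = \nu_j > 0$, $x_{j+1} = P(x_j)$, $x_j \to y$ and $\nu_j \to 0$. The total time is then $\sum_j \tau(x_j) = t^* - t_0 < \infty$. By \eqref{eq:T}, $\tau(x_j) \ge c\,\nu_j$ with $c > 0$, since on invisible-invisible regions $\beta = 2\bigl(1/\cL_{f^L}^2 H - 1/\cL_{f^R}^2 H\bigr)$ is positive and bounded away from zero on the compact set $S$. This forces $\sum_j \nu_j < \infty$. However, \eqref{eq:VP} gives $\nu_{j+1} = \nu_j + \cO(\nu_j^2)$, so $\nu_{j+1} \ge \nu_j(1 - K\nu_j)$ with $K$ uniform on $\cN$. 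Since $\nu_j \to 0$, we get $\nu_{j+1} \ge \nu_j(1-K\nu_j) \ge \nu_j \re^{-2K\nu_j}$ for large $j$. Iterating gives $\nu_j \ge \nu_{J}\exp\bigl(-2K\sum_{i<j}\nu_i\bigr)$, which is bounded below by a positive constant whenever $\sum\nu_i < \infty$. That contradicts $\nu_j \to 0$, so $t^* = \infty$ and no such finite-time arrival exists. The backward-time statement follows by applying the same argument to the time-reversed system, $f^L \mapsto -f^L$, $f^R \mapsto -f^R$. This system is still refracting, invisible-invisible regions remain invisible-invisible (second Lie derivatives are unchanged by the reversal), and $\Lambda$ changes sign, which does not matter because only the leading-order behaviour $\nu_{j+1} = \nu_j + \cO(\nu_j^2)$ is used.

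The main obstacle is the first step: rigorously showing that, just before $t^*$, the orbit consists of alternating $P_R$, $P_L$ excursions within $\cN$ that fit the framework of Theorem \ref{th:P}. The delicate points are excluding arrival along a single smooth piece and starting the spiral with $V > 0$ after a possible half-excursion. I would handle this with the local normal form near an invisible fold. Since $\cL^2 H$ has a definite sign, $H$ along an $f^L$-trajectory near $S$ is strictly convex in time when $V$ is small, so such a trajectory meets $\Sigma$ only in pairs of transversal crossings, or in a tangency lying in $\cT$ reached from $H \ge 0$. The latter is ruled out because the $f^L$ trajectory through an invisible fold of $f^L$ lies locally in $H \ge 0$, so it cannot arrive there from the half-space $H < 0$ where $f^L$ governs the motion; the analogous argument applies to $f^R$. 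After discarding at most one half-revolution, the crossings are related by $P$.
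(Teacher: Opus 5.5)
Your proposal is correct and follows essentially the same route as the paper. Both arguments go by contradiction, realise the approach to $y$ as an infinite sequence of $P$-iterates, and combine $\tau \ge c\,V$ from \eqref{eq:T} with $V(P(z)) \ge V(z) - K V(z)^2$ from \eqref{eq:VP} to force infinite total time. The differences are minor: the paper closes with the inductive bound $V(x^{(m+i)}) \ge \frac{1}{2a_1(p+i)}$ and divergence of the harmonic series, whereas you deduce $\sum_j \nu_j < \infty$ and use $\nu_{j+1} \ge \nu_j \re^{-2K\nu_j}$ to contradict $\nu_j \to 0$, and you give more justification than the paper for the reduction to $P$-iterates and for the backward-time case.
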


\begin{figure}
\begin{center}
\includegraphics[width=12cm]{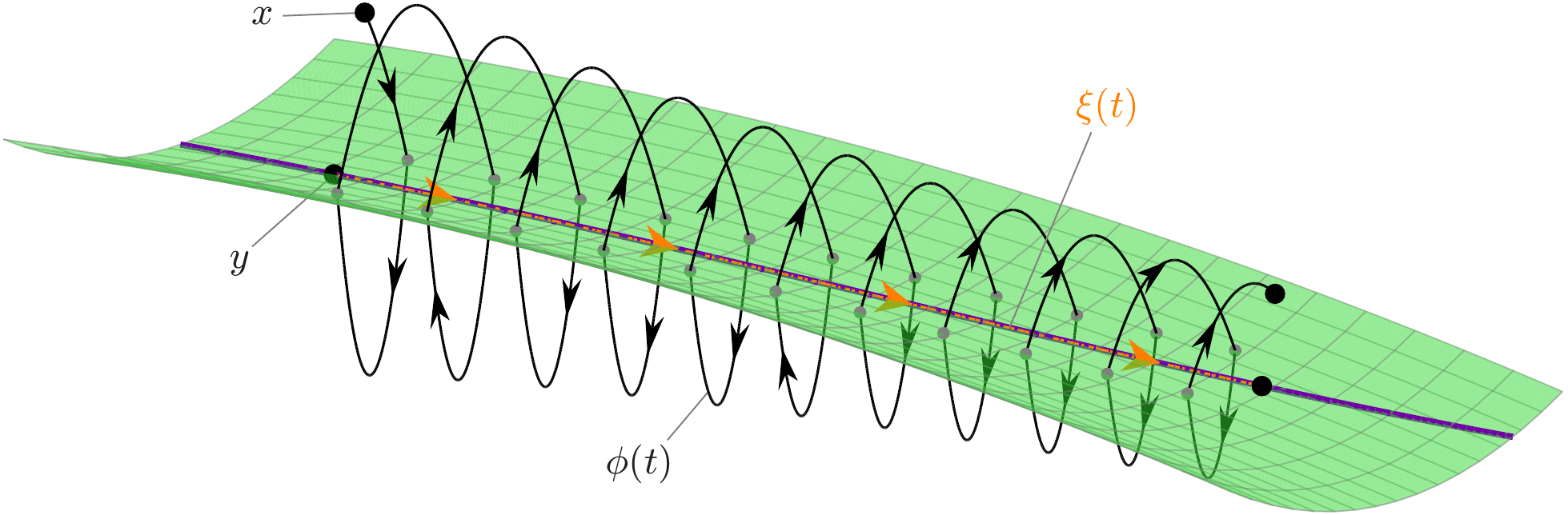}
\caption{
A sketch illustrating the orbits $\phi(t)$ and $\xi(t)$ of Theorem \ref{th:consistency}.
\label{fig:E}
} 
\end{center}
\end{figure}

Before we prove Theorem \ref{th:noZeno}, we provide the following heuristic argument.
Consider the map $V_{j+1} = V_j - a V_j^2$, where $a > 0$ is fixed, as a one-dimensional approximation to \eqref{eq:VP}
in the case of an attracting invisible-invisible region.
It is a simple exercise in mathematical induction to show that
if the initial value obeys $0 < V_0 \le \frac{1}{2 a}$, then 
$V_j \ge \frac{V_0}{1 + 2 a V_0 j}$, for all $j \ge 0$.
With a linear approximation to \eqref{eq:T},
the total evolution time is proportional to the sum of the $V_j$,
so is bounded from below by a harmonic series.
Since harmonic series diverge, this suggests that the total evolution time is infinite.

\begin{proof}[Theorem \ref{th:noZeno}]
Without loss of generality, consider the {\em forward} orbit of $x$, call it $\phi(t)$.
Let $y = \phi(b) \in \cT$ be its first point of intersection with $\cT$,
where $b > 0$ is the (finite) intersection time.
Suppose for a contradiction $y$ belongs to an invisible-invisible region.
Then since the tangent trajectories of $f^L$ and $f^R$ at $y$ are both virtual,
the orbit reaches $y$ via an infinite sequence of switches under the map $P$.

Let $0 < t_0 < b$ be such that $x^{(0)} = \phi(t_0) \in \Sigma$ with $V \left( x^{(0)} \right) > 0$.
For each $j \ge 1$, let $x^{(j)} = P \left( x^{(j-1)} \right)$.
Notice $V \left( x^{(j)} \right) > 0$ for all $j \ge 0$, and $x^{(j)} \to y$ as $j \to \infty$.
Moreover,
\begin{equation}
t_0 + \sum_{j=0}^{N-1} \tau \left( x^{(j)} \right) \to b, ~\text{as $N \to \infty$}.
\label{eq:noZenoProof10}
\end{equation}

By \eqref{eq:VP}, there exist $a_1, a_2 > 0$ and a neighbourhood $\cN \subset \mathbb{R}^n$ of $y$ such that
for all $z \in \cN \cap \Sigma$ with $V(z) > 0$,
\begin{align}
V(P(z)) &\ge V(z) - a_1 V(z)^2, \label{eq:noZenoProof20} \\
\tau(z) &\ge a_2 V(z). \label{eq:noZenoProof21}
\end{align}
Since $x^{(j)} \to y$, there exists $m \ge 0$
such that $x^{(j)} \in \cN$ and $V \left( x^{(j)} \right) < \frac{1}{2 a_1}$ for all $j \ge m$.
Let $p \ge 1$ be such that $V \left( x^{(m)} \right) \ge \frac{1}{2 a_1 p}$.

We now show by induction that
\begin{equation}
V \left( x^{(m+i)} \right) \ge \frac{1}{2 a_1 (p+i)},
\label{eq:noZenoProof30}
\end{equation}
for all $i \ge 0$.
Certainly \eqref{eq:noZenoProof30} is true with $i = 0$,
and if \eqref{eq:noZenoProof30} is true for some $i \ge 0$ then, by \eqref{eq:noZenoProof20},
\begin{align}
V \left( x^{(m+i+1)} \right) &\ge V \left( x^{(m+i)} \right) - a_1 V \left( x^{(m+i)} \right)^2 \\
&\ge \frac{1}{2 a_1 (p+i)} - a_1 \left( \frac{1}{2 a_1 (p+i)} \right)^2 \\
&= \frac{p+i-1}{4 a_1 (p+i)^2 (p+i+1)} + \frac{1}{2 a_1 (p+i+1)} \\
&\ge \frac{1}{2 a_1 (p+i+1)},
\end{align}
so \eqref{eq:noZenoProof30} is also true for $i+1$.
Thus by induction \eqref{eq:noZenoProof30} is true for all $i \ge 0$.
Then by \eqref{eq:noZenoProof21}, $\tau \left( x^{(m+i)} \right) \ge \frac{a_2}{2 a_1 (p+i)}$ for all $i \ge 0$.
Thus
\begin{equation}
\sum_{i=0}^{N-1} \tau \left( x^{(m+i)} \right) \ge \frac{a_2}{2 a_1} \sum_{i=0}^{N-1} \frac{1}{p+i}
\nonumber
\end{equation}
tends to infinity as $N \to \infty$, contradicting \eqref{eq:noZenoProof10}.
\end{proof}

\section{Second-order pseudo-equilibria}
\label{sec:pseq}

A (first-order) pseudo-equilibrium is a zero of the sliding vector field $f^S$ (Definition \ref{df:admissiblePsEq}).
If it belongs to a sliding region, then it is an equilibrium of \eqref{eq:f}
in the sense of being a Filippov solution to \eqref{eq:f}, and we say it is admissible.
Here we generalise this to the second-order sliding vector field $f^T$.

\begin{definition}
A {\em second-order pseudo-equilibrium} of a refracting system \eqref{eq:f} is a point $x^* \in \cT$
for which $\cL_{f^L}^2 H(x^*) \ne \cL_{f^R}^2 H(x^*)$ and $f^T(x^*) = \bO$.
Moreover, $x^*$ is
\begin{enumerate}
\item
{\em admissible} if $x^*$ belongs to a visible-visible or invisible-invisible region, and
\item
{\em virtual} if $x^*$ belongs to a visible-invisible region.
\end{enumerate}
\label{df:admissibleSOPsEq}
\end{definition}

If $x^*$ is admissible, then it is an equilibrium of \eqref{eq:f} because $f^T(x^*) \in \cF(x^*)$,
while if $x^*$ is virtual, then it is not equilibrium of \eqref{eq:f}.

The following result characterises the stability of second-order pseudo-equilibria, analogous to Theorem \ref{th:stab} for pseudo-equilibria.
The restriction of $f^T$ to $\cT$ is an $(n-2)$-dimensional vector field
that we write as $\tilde{f}^T(\tilde{x})$, where $\tilde{x}$ is $(n-2)$-dimensional,
and we let $\tilde{x}^*$ denote $x^*$ in $\tilde{x}$-coordinates.

\begin{theorem}
Let $x^* \in \cT$ be a second-order pseudo-equilibrium of a refracting piecewise-$C^3$ system \eqref{eq:f}.
\begin{enumerate}
\item
If $x^*$ belongs to an attracting invisible-invisible region and
all eigenvalues of $\rD \tilde{f}^T \left( \tilde{x}^* \right)$ have negative real-part,
then $x^*$ is an asymptotically stable equilibrium of \eqref{eq:f}.
\item
If $x^*$ belongs to a visible-visible region,
a repelling invisible-invisible region,
or $\rD \tilde{f}^T \left( \tilde{x}^* \right)$ has an eigenvalue with positive real-part,
then $x^*$ unstable.
\end{enumerate}
\label{th:stab2}
\end{theorem}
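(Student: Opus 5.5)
The plan mirrors the proof of Theorem \ref{th:stab}: combine the smooth stability theory of the reduced field $\dot{\tilde{x}} = \tilde{f}^T(\tilde{x})$ with the transverse behaviour near $\cT$. Transversally, spiralling orbits are governed by Theorem \ref{th:P}, Theorem \ref{th:consistency} and Theorem \ref{th:noZeno}. The key is a Lyapunov-type function adapted to both directions: along $\cT$ a quadratic Lyapunov function for $\tilde{f}^T$, and transverse to $\cT$ the quantity $V$ measured on $\Sigma$ each half-revolution.

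For part 1, I first take a neighbourhood $\cN_1$ and use the eigenvalue hypothesis to obtain a quadratic Lyapunov function $W$ for $\tilde{f}^T$ near $\tilde{x}^*$, with $\dot W \le -c W$. I extend $W$ to a neighbourhood of $x^*$ in $\Omega$ via a smooth projection onto $\cT$. For an orbit starting off $\cT$ near $x^*$, I consider its successive returns $x^{(j)}$ to $\Sigma$ with $V>0$.

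\begin{itemize}
\item By \eqref{eq:VP} and $\Lambda<0$ on a compact neighbourhood of $x^*$ in $\cT$, $V(x^{(j)})$ is decreasing and tends to $0$. The argument is the same as the induction in the proof of Theorem \ref{th:noZeno}, but with an upper bound $V_{j+1}\le V_j - aV_j^2$.
\item By \eqref{eq:P} and \eqref{eq:betafT}, one revolution moves the projection onto $\cT$ by $\tau f^T + \cO(\nu^2)$. Hence $W$ changes by $-c\,\tau W + \cO(\nu^2)$.
\item I then use a combined function $\mathcal{E} = W + K V$ on $\Sigma$ with $K$ large, evaluated at the points $x^{(j)}$. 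The bound $\cO(\nu^2)$ is dominated by the $-a K \nu^2$ decrease from $V$ together with $-c\tau W$, using $\tau\sim\beta\nu$ from \eqref{eq:T}.
\end{itemize}

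This shows $\mathcal{E}(x^{(j)})$ is nonincreasing and tends to $0$. Between returns the orbit stays within $\cO(\nu)$ of $x^{(j)}$, since each excursion has duration $\cO(\nu)$. This gives Lyapunov stability and convergence. Orbits starting on $\cT$ follow $f^T$ and converge directly. Orbits are unique here because the points lie in a crossing region or an invisible-invisible region. Theorem \ref{th:noZeno} ensures orbits starting off $\cT$ never land on $\cT$ in finite time, so the map iteration covers them for all time. Alternatively, Theorem \ref{th:consistency} can give stability on finite time windows, and the $\mathcal{E}$ argument then controls the long-time behaviour.

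For part 2 the instability cases are handled separately.

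\begin{itemize}
\item \textbf{Visible-visible region.} I use the tangent orbit of $f^L$ through $x^*$. Since $\cL^2_{f^L}H(x^*)<0$, this orbit is a Filippov solution leaving $x^*$ into $H<0$, and $f^L(x^*)\neq\bO$ because $\cL^2_{f^L}H(x^*)\ne 0$ forces $f^L(x^*)\ne\bO$. So $x^*$ is unstable, as in the repelling case of Theorem \ref{th:stab}.
\item \textbf{Repelling invisible-invisible region.} By \eqref{eq:VP} with $\Lambda>0$, every orbit starting arbitrarily close to $x^*$ with small $\nu>0$ has $V(x^{(j)})$ increasing. I show it reaches a fixed size $\nu_0$ while remaining in a neighbourhood where the asymptotics hold, or else leaves that neighbourhood; either way $\cN_1$ is exited.
\item \textbf{Eigenvalue with positive real part.} Orbits of $f^T$ on $\cT$ starting arbitrarily close to $x^*$ leave a neighbourhood, and these are Filippov solutions.
\end{itemize}

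The main obstacle is the uniform bookkeeping in part 1. The $\cO(\nu^2)$ errors in the tangential motion must be absorbed uniformly over infinitely many revolutions, and the $\cO(\nu^2)$ remainders in \eqref{eq:P} and \eqref{eq:VP} need constants uniform on a compact neighbourhood, which follows from the compactness in Theorem \ref{th:P}. The delicate point is the weighting $K$: the tangential error is $\cO(\nu^2)$ while $\tau W\sim\nu W$, and the cross term must be controlled. I would first try $\mathcal{E} = W + K V$. If that fails, I would try $\mathcal{E} = W + K\sqrt{V}$, matching the scale $\mathrm{dist}(x,\cT)\sim\sqrt{V}$ near an invisible fold. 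I would then verify the decrease using \eqref{eq:VP}, which gives $\sqrt{V}$ a decrease of order $\nu^{3/2}$ per revolution.
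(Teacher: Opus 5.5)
Your proposal is correct. Part 2 matches the paper, except that you use the tangent orbit of $f^L$ where the paper uses $f^R$ in the visible-visible case. Part 1 follows a genuinely different route.

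The paper proves Lyapunov stability (Lemma \ref{le:noEscape}) by shadowing. Theorem \ref{th:consistency} keeps a spiralling orbit within $M\|x-y\|$ of the $f^T$-orbit of a nearby $y\in\cT$ on a window $[0,b]$. The window is chosen so that the exponential contraction of $f^T$ shrinks distances by $\tfrac14$, and $V(P(x))\le V(x)$ then gives recursive containment. Attraction comes from an $\omega$-limit argument, again through Theorem \ref{th:consistency}.

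You instead use the discrete Lyapunov function $\mathcal{E}=W\circ\pi+KV$ at return points, together with only the one-revolution expansions \eqref{eq:T}, \eqref{eq:P}, \eqref{eq:VP}. Your first choice of $\mathcal{E}$ already works. Per revolution, $\Delta W\le -c\beta\nu W+C\nu^2$ with a uniform $C$, which covers the Hessian term and $\nabla W$ applied to the $\cO(\nu^2)$ remainder. Also $\Delta V\le -a\nu^2$ with a uniform $a>0$ coming from $\Lambda<0$. Taking $K>C/a$ gives $\Delta\mathcal{E}\le -c\beta\nu W-(aK-C)\nu^2\le -c'\nu\,\mathcal{E}$, so the cross term you worry about is harmless.

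Your argument is therefore self-contained: it needs neither Theorem \ref{th:consistency} nor the $\omega$-limit step. Because $\tau=\cO(\nu)$, it even gives exponential decay of $\mathcal{E}$ in time. The paper's route is more modular: the shadowing estimate can be reused for other invariant sets of $f^T$, and the stability part needs only $V(P(x))\le V(x)$ rather than a quantitative decrease.

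Two points to tighten.
\begin{enumerate}
\item Monotonicity alone does not give $\mathcal{E}(x^{(j)})\to0$. You also need $\sum_j V(x^{(j)})=\infty$. This follows from $V(P(x))\ge V(x)-a_1V(x)^2$ and the harmonic-series induction in the proof of Theorem \ref{th:noZeno}. Then $\mathcal{E}_{j+1}\le(1-c'\nu_j)\mathcal{E}_j$ forces $\mathcal{E}_j\to0$.
\item The $\sqrt{V}$ fallback is unnecessary, and its motivation is mis-scaled. On $\Sigma$ near an invisible-invisible point, $\nabla H$ and $\nabla V$ are independent, so $\mathrm{dist}(x,\cT)$ is comparable to $V(x)$ itself, as in \eqref{eq:noEscapeProof14}. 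What is of order $V^2$ is the excursion in $H$ along each arc.
\end{enumerate}
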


As a visual aid, Fig.~\ref{fig:F} shows orbits near a second-order pseudo-equilibrium $x^*$
of a three-dimensional system.
Here $\rD \tilde{f}^T \left( \tilde{x}^* \right)$ is scalar and negative,
so second-order sliding orbits converge to $x^*$.
The equilibrium $x^*$ belongs to a repelling invisible-invisible region
so is unstable and the nearby sprialling orbit diverges.

To prove Theorem \ref{th:stab2}, we first show that in case (1) $x^*$ is Lyapunov stable. 
This is done in Appendix \ref{app:LyapStable} by using Theorem \ref{th:consistency} to show that spiralling orbits
are well approximated by second-order sliding orbits,
that we know converge exponentially to $x^*$ by the eigenvalue assumption.

\begin{proof}[Theorem \ref{th:stab2}]~
\begin{enumerate}
\item
By Lemma \ref{le:noEscape}, $x^*$ is Lyapunov stable.
Thus for any neighbourhood $\cN_1 \subset \Omega$ of $x^*$
there exists a neighbourhood $\cN_2 \subset \cN_1$ of $x^*$ such that the forward orbit of every $x \in \cN_2$ does not escape $\cN_1$.
In view of the eigenvalue condition we can assume $\cN_2$ is small enough
that the forward orbit of every $x \in \cN_2 \cap \cT$ under $f^T$ converges to $x^*$ as $t \to \infty$.
It remains to show that the forward orbit of every $x \in \cN_2 \setminus \cT$ converges to $x^*$ as $t \to \infty$.

Since $\Lambda(x^*) < 0$ we can assume $\cN_2$ is small enough
that $V(P(x)) \le V(x) - a V(x)^2$ for all $x \in \cN_2 \cap \Sigma$ with $V(x) > 0$, for some constant $a > 0$.
Since iterations of $V_{j+1} = V_j - a V_j^2$ converge to $V = 0$ for any sufficiently small $V_0 > 0$,
the forward orbit of any $x \in \cN_2 \cap \Sigma$ under \eqref{eq:f} converges to $\cN_2 \cap \cT$ as $t \to \infty$.
Thus $\omega(x)$ (the $\omega$-limit set of $x$) is non-empty and contained in $\cN_2 \cap \cT$.

We now show that $\omega(x)$ is forward invariant under $f^T$.
Choose any $z \in \omega(x)$.
Let $\psi(t)$ be the forward orbit of $z$ under $f^T$,
and let $\phi(t)$ be the forward orbit of $x$ under \eqref{eq:f}.
Let $t_k$ be an increasing and unbounded sequence of times
with the property that $\phi(t_k) \to z$ as $k \to \infty$,
and choose any $t > 0$.
Then $\left\| \phi(t_k + t) - \psi(t) \right\| \to 0$ as $k \to \infty$ by Theorem \ref{th:consistency},
thus $\psi(t) \in \omega(x)$.
Thus $\omega(x)$ is forward invariant under $f^T$,
but the only such set in $\cN_2 \cap \cT$ is $\{ x^* \}$.
Thus $\omega(x) = \{ x^* \}$, that is, the forward orbit of $x$ under \eqref{eq:f} converges to $x^*$.
Thus $x^*$ is asymptotically stable.
\item
If $x^*$ belongs to a repelling invisible-invisible region,
then there exists $a > 0$ and a neighbourhood $\cN_1 \subset \Omega$ of $x^*$ such that
$V(P(x)) \ge V(x) + a V(x)^2$ for $x \in \cN_1 \cap \Sigma$ with $V(x) > 0$.
Since iterations of $V_{j+1} = V_j + a V_j^2$ escape a neighbourhood of $V = 0$ for any sufficiently small $V_0 > 0$,
there exists a neighbourhood $\cN_2 \subset \cN_1$ of $x^*$ such that
the forward orbits of all $x \in \cN_2 \cap \Sigma$ with $V(x) > 0$ escape $\cN_2$, and so $x^*$ is unstable.
If $x^*$ belongs to a visible-visible region
then $x^*$ is unstable because the orbit of $f^R$ emanating from $x^*$
is an (admissible) solution to \eqref{eq:f}.
Finally if $\rD \tilde{f}^T \left( \tilde{x}^* \right)$ has an eigenvalue with positive real-part,
then $\tilde{x}^*$ is unstable for $\tilde{f}^T$,
and thus $x^*$ is unstable for \eqref{eq:f}.
\end{enumerate}
\end{proof}

\begin{figure}
\begin{center}
\includegraphics[width=10.8cm]{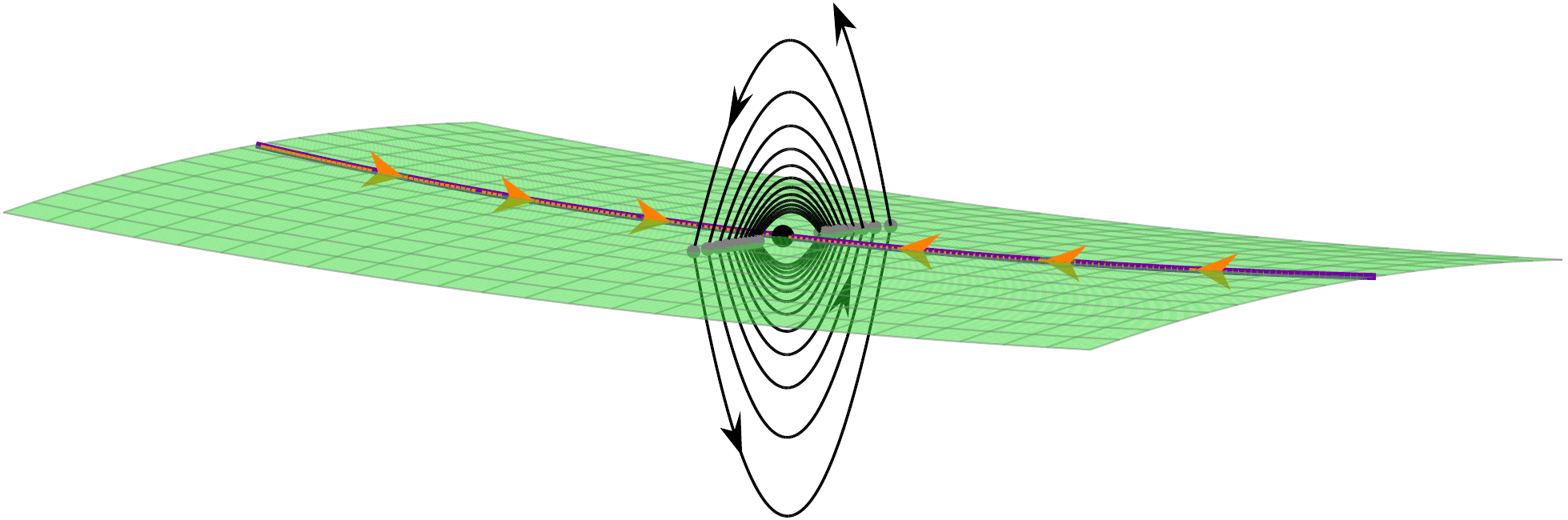}
\caption{
A phase portrait of a three-dimensional refracting system
near a second-order pseudo-equilibrium belonging to a repelling invisible-invisible region.
\label{fig:F}
} 
\end{center}
\end{figure}

\section{Application to mechanical systems with impacts}
\label{sec:impacts}

Many mechanical systems contain rigid components that bump into each other
causing complicated dynamics including chaos.
To understand this behaviour it can be helpful to analyse low-dimensional models.
The models often combine ODEs for free motion, with either a reset law (map) for impact events leading to a hybrid model,
or a different set of ODEs for in-contact motion leading to a piecewise-smooth model \cite{DiBu08}.

In the latter case the model is often a refracting Filippov system.
This is because if the discontinuity surface of a system \eqref{eq:f} represents a certain location of an object,
then the Lie derivatives $\cL_{f^L} H(x)$ and $\cL_{f^R} H(x)$ represent the velocity of the object
relative to the location, so are identical for $f^L$ and $f^R$.

As an example we consider the non-dimensionalised model
\begin{equation}
\ddot{x}_1 + b \dot{x}_1 + k (x_1 + 1) = \cA \cos(t) +
\begin{cases}
0, & x_1 < 0, \\
-k_D (x_1 + d), & x_1 > 0,
\end{cases}
\label{eq:impactOsc}
\end{equation}
of the block-damper system shown in Fig.~\ref{fig:BlockDamper}.
The position $x_1(t)$ of the block is treated as a harmonically forced linear oscillator with equilibrium position $x_1 = -1$.
When $x_1(t) > 0$ the block is assumed to be in contact with a massless damper that is prestressed by a distance $d > 0$.

If the forcing amplitude $\cA$ is less than $\cA_{\rm graz} = \sqrt{(k-1)^2 + b^2}$,
the system has a stable non-impacting periodic orbit.
At $\cA = \cA_{\rm graz}$ this orbit undergoes a grazing bifurcation
leading to complicated long-term dynamics \cite{MaAg06,MaIn08,MoDe01,PaIn10}.
Here we consider yet larger values of $\cA$
with which the acceleration of the block at $x_1 = 0$ can be positive and second-order sliding motion can occur.

We first write the model in the form \eqref{eq:f}.
Let $x = (x_1,x_2,x_3)$, where $x_2 = \dot{x}_1$ is the velocity of the block,
and $x_3 = t \,{\rm mod}\, 2 \pi$ is the phase of the forcing.
Then \eqref{eq:impactOsc} is equivalent to \eqref{eq:f} with
\begin{align}
f^L(x) &= \begin{bmatrix}
x_2 \\
-k (x_1 + 1) - b x_2 + \cA \cos(x_3) \\
1 \end{bmatrix}, &
g(x) &= \begin{bmatrix}
0 \\
-k_D (x_1 + d) \\
0 \end{bmatrix}, &
H(x) &= x_1 \,,
\label{eq:impactOscfLgH}
\end{align}
and $f^R(x) = f^L(x) + g(x)$.
The discontinuity surface $\Sigma$ consists of all points $x$ for which $x_1 = 0$.
Observe $\cL_{f^L} H(x) = \cL_{f^R} H(x) = x_2$, so the system is refracting,
and the tangency surface $\cT$ consists of all points $x$ for which $x_1 = x_2 = 0$.
On $\cT$ the second Lie derivatives are
\begin{align}
\cL_{f^L}^2 H(x) \big|_{\cT} &= \cA \cos(x_3) - k , &
\cL_{f^R}^2 H(x) \big|_{\cT} &= \cA \cos(x_3) - k - k_D d.
\end{align}
Thus subsets of $\cT$ for which $k < \cA \cos(x_3) < k + k_D d$ are invisible-invisible regions.
Such a region is indicated in Fig.~\ref{fig:Imp}:
it is bounded by points $c^L$ and $c^R$ where $\cL_{f^L}^2 H(x) = 0$ and $\cL_{f^R}^2 H(x) = 0$ respectively.
On this region Filippov solutions obey the second-order sliding vector field $f^T$,
and by evaluating the formula \eqref{eq:fT} on $\cT$ we obtain simply
\begin{equation}
f^T(x) = \begin{bmatrix} 0 \\ 0 \\ 1 \end{bmatrix}.
\label{eq:impactOscfT}
\end{equation}

\begin{figure}
\begin{center}
\includegraphics[width=10.8cm]{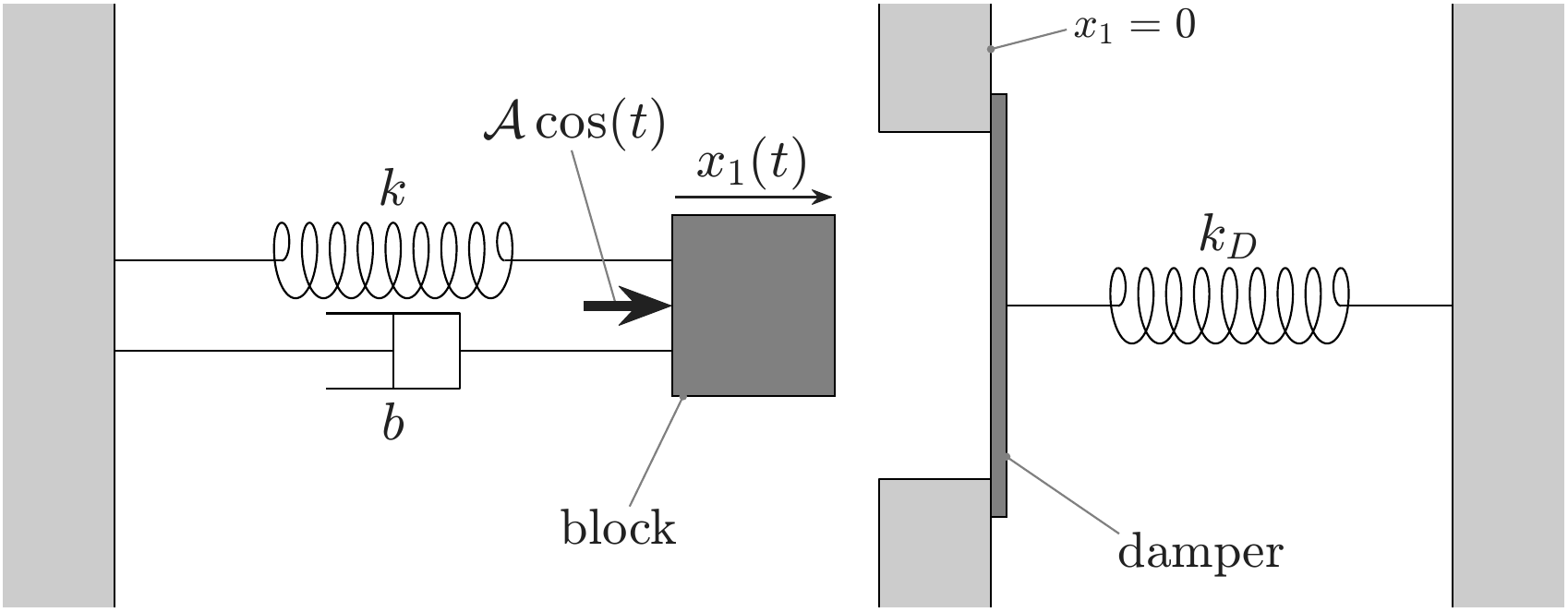}
\caption{
The block-damper system modelled by \eqref{eq:impactOsc}.
\label{fig:BlockDamper}
} 
\end{center}
\end{figure}

On $\cT$ the third Lie derivatives are
$\cL_{f^L}^3 H(x) \big|_{\cT} = \cL_{f^R}^3 H(x) \big|_{\cT} = -\cA \sin(x_3)$.
Thus the value \eqref{eq:Lambda} is
\begin{equation}
\Lambda = -\cA \sin(x_3) \left( \frac{1}{\left( \cA \cos(x_3) - k \right)^2}
- \frac{1}{\left( \cA \cos(x_3) - k - k_D d \right)^2} \right).
\label{eq:impactOscLambda}
\end{equation}
Above the point $\chi$ indicated in Fig.~\ref{fig:Imp},
we have $\Lambda < 0$, and so the invisible-invisible region is attracting.
Below $\chi$, $\Lambda > 0$, and the region is repelling.

Fig.~\ref{fig:Imp} also shows part of a typical orbit.
The orbit spirals around the invisible-invisible region
during which time it bows outwards from $\cT$ in the $x_2$-direction while $\Lambda < 0$,
and bends back toward $\cT$ while $\Lambda > 0$.
Eventually the orbit leaves the vicinity of the invisible-invisible region and escapes into the left half-space $x_1 < 0$.

\begin{figure}
\begin{center}
\includegraphics[width=12cm]{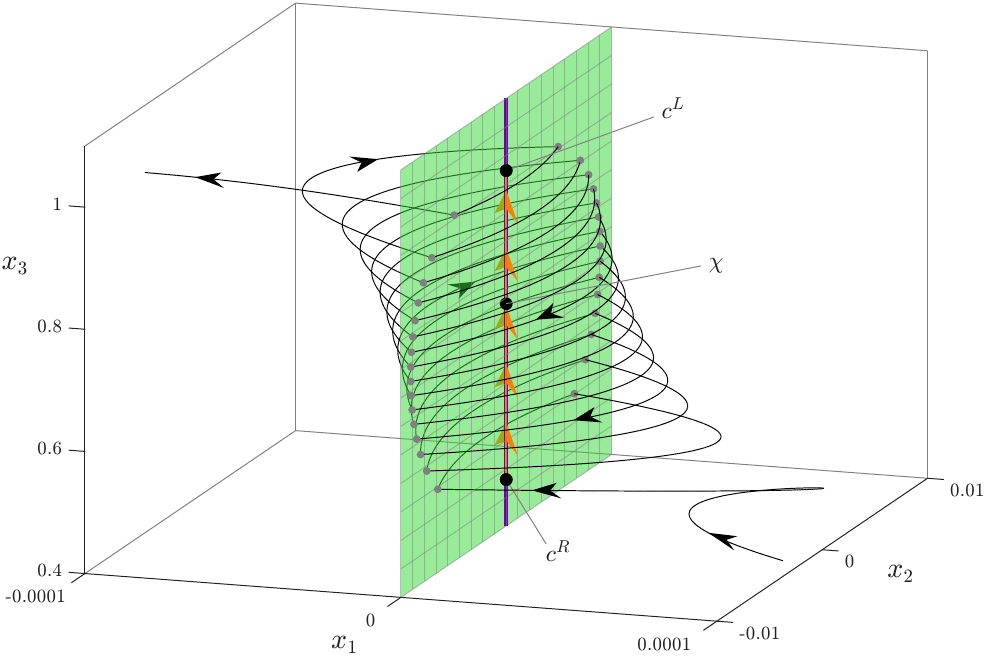}
\caption{
An orbit of the impact oscillator model \eqref{eq:impactOsc}
with $x_2 = \dot{x}_1$ and $x_3 = t \,{\rm mod}\, 2 \pi$,
and parameters $k = 5$, $b = 0.5$, $k_D = 10$, $d = 0.3$, and $\cA = 9$.
Between the points $c^L$ and $c^R$ (where $f^L$ and $f^R$, respectively, are cubically tangent to the discontinuity surface),
the tangency surface $\cT$ is invisible-invisible.
Between $c^L$ and $\chi$, $\cT$ is attracting;
between $c^R$ and $\chi$, $\cT$ is repelling.
\label{fig:Imp}
} 
\end{center}
\end{figure}

\section{Application to ant colony migrations}
\label{sec:ants}

Consider \eqref{eq:f} with $x = (x_1,x_2,x_3)$ and
\begin{equation}
\begin{split}
f^L(x) &= \begin{bmatrix}
\left( \alpha_{s a} + \beta_{\ell s} x_2 + \beta_{c s} x_3 \right) (\rho N - x_1 - x_2 - x_3) - (\alpha_{a s} + \alpha_{a \ell}) x_1 \\
\alpha_{a \ell} x_1 - \alpha_{\ell s} x_2 \\
-\alpha_{c s} x_3
\end{bmatrix}, \\
g(x) &= \begin{bmatrix}
0 \\ -\alpha_{\ell c} x_2 \\ \alpha_{\ell c} x_2
\end{bmatrix}, \qquad \qquad
H(x) = x_1 + x_2 + x_3 - \Theta,
\end{split}
\label{eq:antfLgH}
\end{equation}
and $f^R(x) = f^L(x) + g(x)$,
where $N, \Theta, \rho, \alpha_{c s}, \alpha_{\ell s}, \beta_{\ell s}, \beta_{c s}, \alpha_{a \ell}, \alpha_{\ell c}, \alpha_{a s}, \alpha_{s a} > 0$ are parameters.
This is a simplified ant colony migration model of Wang {\em et al.}~\cite{WaQi24}.
It is a compartmental model where worker ants moving around a certain site are each given a classification based on their behaviour.
In the model, $x_1$ is the number of {\em assessing} ants,
$x_2$ is the number of {\em leading} ants,
and $x_3$ is the number of {\em carrying} ants
(denoted $A$, $L$, and $C$ respectively in \cite{WaQi24}).
The model assumes the ants change classification at certain rates,
leading to the various terms in $f^L(x)$.
Further, the model assumes that when the number of ants at the site exceeds a threshold $\Theta$,
the colony decides collectively to relocate their home to this site.
This leads to an immediate increase in ants carrying nestmates and items from their present home to the site,
and this assumption is incorporated through the functions $g(x)$ and $H(x)$.

Wang {\em et al.}~prove that each piece of the model has a unique stable equilibrium.
The equilibrium $x^L$ of $f^L$ corresponds to the colony remaining at their present home,
while the equilibrium $x^R$ of $f^R$ corresponds to the colony having completed migration from their home to the site.
For some values of the parameters, both equilibria are virtual.
In this case, orbits of the system settle to a crossing limit cycle that corresponds to
the colony repeatedly changing their mind about whether or not migrate, Fig.~\ref{fig:Ant}.

\begin{figure}
\begin{center}
\includegraphics[width=12cm]{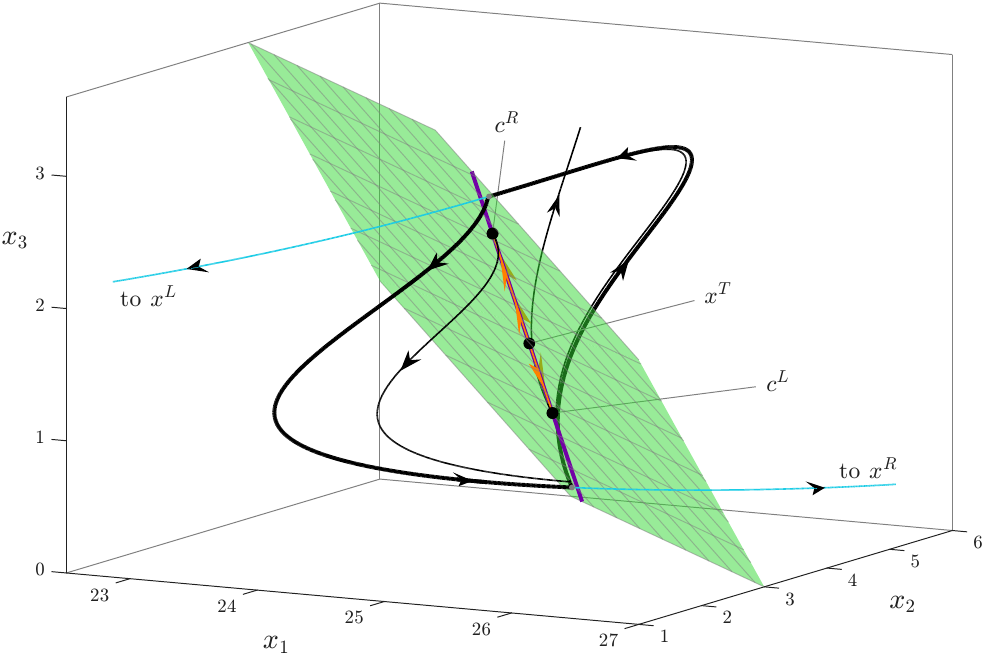}
\caption{
A phase portrait of the ant colony migration model \eqref{eq:f} with \eqref{eq:antfLgH}.
The parameter values are as in Figure 2 of \cite{WaQi24}:
$N = 200$,
$\Theta = 30$,
$\rho = 0.25$,
$\alpha_{c s} = 0.07$,
$\alpha_{\ell s} = 0.018$,
$\beta_{\ell s} = 0.049$,
$\beta_{c s} = 0.079$,
$\alpha_{a \ell} = 0.007$,
$\alpha_{\ell c} = 0.15$,
$\alpha_{a s} = 0.24$, and
$\alpha_{s a} = 0.01$.
The discontinuity surface $\Sigma$ is shaded green,
and the tangency surface $\cT$ is coloured purple.
Second-order sliding motion is coloured orange on the visible-visible part of $\cT$
and emanates from the pseudo-equilibrium $x^T$.
We show the forward orbits of the points $c^L$ and $c^R$ where $f^L$ and $f^R$ are cubically tangent to $\Sigma$.
The latter orbit converges to a stable crossing limit cycle (thick black loop).
We also show virtual extensions (blue) from the limit cycle.
These extensions converge to virtual equilibria located outside the plotted range.
\label{fig:Ant}
} 
\end{center}
\end{figure}

Here we treat the model as a Filippov system with discontinuity surface
\begin{equation}
\Sigma = \left\{ x \in \mathbb{R}^3 \,\middle|\, x_1 + x_2 + x_3 = \Theta \right\}.
\nonumber
\end{equation}
For all $x \in \Sigma$, $\nabla H(x)^{\sf T} g(x) = 0$, thus
$\cL_{f^L} H(x) = \cL_{f^R} H(x)$, so the system is refracting.
For the parameter values of Fig.~\ref{fig:Ant}, the tangency surface $\cT$ contains a visible-visible region.
By numerically evaluating the second-order sliding vector field $f^T$ on this region,
we find that second-order sliding orbits head away from a unique pseudo-equilibrium $x^T$.
This second-order sliding motion is not important physically, because here $\cT$ is visible-visible,
but it allows us to complete the phase portrait:
the stable limit cycle revolves around the unstable equilibrium $x^T$,
that corresponds to the colony forever hesitating on their choice as to whether or not to migrate.

\section{Two-dimensional systems}
\label{sec:2d}

In this section we consider the two-piece form \eqref{eq:f} in two dimensions
and compare the quantity $\Lambda$, derived above for refracting systems,
to a quantity derived in \cite{CoGa01,Si22b} that dictates
the stability of invisible-invisible planar two-folds of general Filippov systems.

Write $x = (x_1,x_2)$, and for simplicity suppose $H(x) = x_1$.
We Taylor expand $f^L$ and $f^R$ about the origin keeping all terms to second order:
for each $Z = L,R$ we write
\begin{equation}
f^Z(x) = \begin{bmatrix}
a_{0Z} + a_{1Z} x_1 + a_{2Z} x_2 + a_{3Z} x_1^2 + a_{4Z} x_1 x_2 + a_{5Z} x_2^2 \\[1.2mm]
b_{0Z} + b_{1Z} x_1 + b_{2Z} x_2 + b_{3Z} x_1^2 + b_{4Z} x_1 x_2 + b_{5Z} x_2^2
\end{bmatrix} + \cO \left( \| x \|^3 \right),
\label{eq:2dfZ}
\end{equation}
where $a_{0Z}, \ldots, b_{5Z} \in \mathbb{R}$ are constants.
If
\begin{equation}
\begin{aligned}
a_{0L} &= 0, \qquad \qquad & a_{0R} &= 0, \\
a_{2L} &> 0, \qquad \qquad & a_{2R} &> 0, \\
b_{0L} &> 0, \qquad \qquad & b_{0R} &< 0,
\end{aligned}
\label{eq:2dass1}
\end{equation}
then the origin is an invisible fold for both $f^L$ and $f^R$,
and we can consider the Poincar\'e map $P = P_L \circ P_R$ illustrated in Fig.~\ref{fig:D}.
By \cite[equation (D.7)]{Si22b},
\begin{equation}
P(r) = r + \frac{2 (\sigma_L - \sigma_R)}{3} \,r^2 + \cO \left( r^3 \right),
\label{eq:2dP}
\end{equation}
where
\begin{equation}
\sigma_Z = \frac{a_{1Z}}{b_{0Z}} + \frac{b_{2Z}}{b_{0Z}} - \frac{a_{5Z}}{a_{2Z}},
\label{eq:2dsigmaZ}
\end{equation}
for each $Z = L,R$.
Given sufficiently small $r > 0$, the forward orbit of $x = (0,r)$ under \eqref{eq:f} next returns to the positive $x_2$-axis at $x = (0,P(r))$.
Thus if $\sigma_L < \sigma_R$, then iterates of $P$ tend to $0$
and the origin is an asymptotically stable equilibrium of \eqref{eq:f},
while if $\sigma_L > \sigma_R$, then iterates of $P$ diverge from $0$ and the origin is unstable.

We now compare the sign of $\sigma_L - \sigma_R$ to the sign of $\Lambda$.
The first three Lie derivatives of $f^Z$ with respect to $H$ are
\begin{align}
\cL_{f^Z} H(x) &= \nabla H(x)^{\sf T} f^Z(x) \nonumber \\
&= a_{1Z} x_1 + a_{2Z} x_2 + a_{3Z} x_1^2 + a_{4Z} x_1 x_2 + a_{5Z} x_2^2 + \cO \left( \| x \|^3 \right), \nonumber \\
\cL_{f^Z}^2 H(x) &= \nabla \left( \cL_{f^Z} H(x) \right)^{\sf T} f^Z(x) \nonumber \\
&= \begin{bmatrix} a_{1Z} + 2 a_{3Z} x_1 + a_{4Z} x_2 & a_{2Z} + a_{4Z} x_1 + 2 a_{5Z} x_2 \end{bmatrix}
\begin{bmatrix} a_{1Z} x_1 + a_{2Z} x_2 \\ b_{0Z} + b_{1Z} x_1 + b_{2Z} x_2 \end{bmatrix} + \cO \left( \| x \|^2 \right) \nonumber \\
&= a_{2Z} b_{0Z} + \left( a_{1Z}^2 + a_{2Z} b_{1Z} + a_{4Z} b_{0Z} \right) x_1
+ \left( a_{1Z} a_{2Z} + a_{2Z} b_{2Z} + 2 a_{5Z} b_{0Z} \right) x_2 + \cO \left( \| x \|^2 \right), \nonumber \\
\cL_{f^Z}^3 H(x) &= \nabla \left( \cL_{f^Z}^2 H(x) \right)^{\sf T} f^Z(x) \nonumber \\
&= \begin{bmatrix} a_{1Z}^2 + a_{2Z} b_{1Z} + a_{4Z} b_{0Z} & a_{1Z} a_{2Z} + a_{2Z} b_{2Z} + 2 a_{5Z} b_{0Z} \end{bmatrix}
\begin{bmatrix} 0 \\ b_{0Z} \end{bmatrix} + \cO \left( \| x \| \right) \nonumber \\
&= \left( a_{1Z} a_{2Z} + a_{2Z} b_{2Z} + 2 a_{5Z} b_{0Z} \right) b_{0Z} + \cO \left( \| x \| \right). \nonumber
\end{align}
By evaluating these at the origin and substituting them into \eqref{eq:Lambda}, we obtain
\begin{equation}
\Lambda(0) = \frac{1}{a_{2L}} \left( \frac{a_{1L}}{b_{0L}} + \frac{b_{2L}}{b_{0L}} + \frac{2 a_{5L}}{a_{2L}} \right)
- \frac{1}{a_{2R}} \left( \frac{a_{1R}}{b_{0R}} + \frac{b_{2R}}{b_{0R}} + \frac{2 a_{5R}}{a_{2R}} \right).
\label{eq:Lambda2d1}
\end{equation}
But if \eqref{eq:f} is refracting, then
$\cL_{f^L} H(x) = \cL_{f^R} H(x)$ at any $x = (x_1,x_2)$ with $x_1 = 0$ and $x_2 \in \mathbb{R}$.
This implies $a_{2L} = a_{2R}$ and $a_{5L} = a_{5R}$, in which case \eqref{eq:Lambda2d1} reduces to
\begin{equation}
\Lambda(0) = \frac{1}{a_2} \left( \frac{a_{1L}}{b_{0L}} + \frac{b_{2L}}{b_{0L}}
- \frac{a_{1R}}{b_{0R}} - \frac{b_{2R}}{b_{0R}} \right),
\nonumber
\end{equation}
where $a_2 = a_{2L} = a_{2R}$.
Moreover,
\begin{equation}
\Lambda(0) = \frac{\sigma_L - \sigma_R}{a_2},
\nonumber
\end{equation}
by \eqref{eq:2dsigmaZ}.
Since $a_2 > 0$ (by the invisibility assumption),
the sign of $\Lambda$ is equivalent to the sign of $\sigma_L - \sigma_R$.

\section{Relation to second-order sliding mode control}
\label{sec:control}

Many engineered systems contain control algorithms designed to impose certain dynamical behaviours.
The control strategies commonly contain switches, because these are simple to implement and often mathematically optimal \cite{AtFa66}.
The switches introduce discontinuities, and {\em sliding mode control}
refers to control systems that are designed to maintain the system state on a discontinuity threshold.
The idealised description of this behaviour is sliding motion on an attracting sliding region \cite{JoRa99}.

But with a hard switch the system in practice usually switches rapidly and repeatedly,
rather than settling to smooth averaged motion.
Such {\em chattering} is undesirable as it strains physical components leading to wear and tear.
There are many modifications of hard switches that mitigate chattering,
one of them being the addition of an integrator
that leads to {\em second-order sliding mode control} \cite{ShEd14,BaPi03,BoFr07}.
For example, suppose we wish to bring the value of a quantity $x_1 \in \mathbb{R}$ to zero.
Instead of direct control $\dot{x}_1 = u$, for which the sliding surface is $x_1 = 0$,
with an integrator we have
\begin{equation}
\ddot{x}_1 = u,
\label{eq:Bo00f}
\end{equation}
and the second-order sliding surface is $x_1 = \dot{x}_1 = 0$.
Various algorithms have been developed for the control action $u$,
such as the {\em twisting algorithm} and {\em sub-optimal algorithm} \cite{BaPi03,FrLe02,Le03b}.
These reduce chattering because the idealised system contains no sliding regions.

However, the algorithms typically employ multiple switches,
in which case the form of the idealised system is more complicated than \eqref{eq:f}.
This enables the system to achieve the control task in finite time,
which does not occur in our setting by Theorem \ref{th:noZeno}.

For instance, suppose we wish to minimise $\int_0^\infty x_1(t)^2 \,dt$
subject to the constraint $|u| \le 1$.
If the system has the form \eqref{eq:Bo00f}, the optimal control strategy is
\begin{equation}
u = \begin{cases}
1, & x_1 < -C x_2 |x_2|, \\
-1, & x_1 > -C x_2 |x_2|,
\end{cases}
\label{eq:Bo00u}
\end{equation}
where $x_2 = \dot{x}_1$ and $C = \sqrt{\frac{\sqrt{33} - 1}{24}} \approx 0.4446$, see Borisov \cite{Bo00}.
This system can be written in the form \eqref{eq:f}
using $H(x) = x_1 + C x_2 |x_2|$, see Fig.~\ref{fig:ppFuller}.
However, Theorem \ref{th:noZeno} does not apply to this system because $H$ is not $C^1$.

\begin{figure}
\begin{center}
\includegraphics[width=8cm]{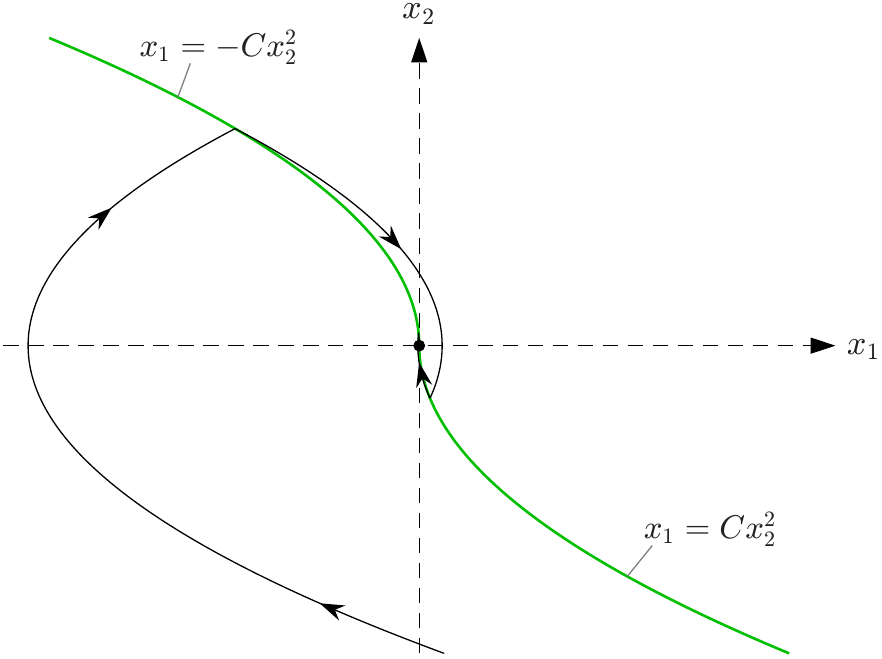}
\caption{
A typical orbit of the control system \eqref{eq:Bo00f}--\eqref{eq:Bo00u},
where $x_2 = \dot{x}_1$ and $C = \sqrt{\frac{\sqrt{33} - 1}{24}}$.
This illustrates Fuller's phenomenon \cite{Bo00}
whereby the control task is completed in finite time through infinitely many switching events.
\label{fig:ppFuller}
} 
\end{center}
\end{figure}

Indeed, it is a simple exercise to show
that the forward orbit of any $x = (-C \zeta_0^2,\zeta_0)$ with $\zeta_0 > 0$
next returns to the $x_2 > 0$ part of the discontinuity surface at
$x = (-C \zeta_1^2,\zeta_1)$, where $\zeta_1 = \left( 1 - \frac{4 C}{2 C + 1} \right) \zeta_0$, after a time
\begin{equation}
\tau = \left( 1 + \sqrt{1 - \frac{4 C}{2 C + 1}} \right)^2 \zeta_0.
\label{eq:Bo00tau}
\end{equation}
Consequently, subsequent switching times obey a geometric series and the orbit
reaches the origin at the time
\begin{align}
\tau_{\rm total} &= \left( 1 + \sqrt{1 - \frac{4 C}{2 C + 1}} \right)^2
\sum_{j=0}^\infty \left( 1 - \frac{4 C}{2 C + 1} \right)^j \zeta_0 \nonumber \\
&= \frac{2 C + 1}{4 C} \left( 1 + \sqrt{1 - \frac{4 C}{2 C + 1}} \right)^2 \zeta_0 \,.
\end{align}
This is an example of Zeno's phenomenon of infinitely many switches in finite time,
also known as Fuller's phenomenon in this context \cite{Bo00}.

In summary, applied examples of second-order sliding mode control systems
do not fit the framework developed in this paper.
This is because refracting systems
do not allow for infinitely many switches in finite time,
which is a desirable feature for idealised control systems that demand the control task be completed promptly.

\section{Discussion}
\label{sec:conc}

In this paper we have established basic mathematical properties for refracting Filippov systems
which are characterised by continuity of the first
Lie derivative across discontinuity surfaces (Definition \ref{df:secondOrder}).
The most significant feature of refracting systems
is spiralling motion about an invisible-invisible subset of a codimension-two tangency surface.
This occurs for the block-damper system of Fig.~\ref{fig:BlockDamper} when $k < \cA \cos(x_3) < k + k_D d$.
In this case the external forcing overcomes the force of the spring,
but not the combined force of the spring and the damper.
Consequently the block is repeatedly pushed onto and off the damper,
corresponding to spiralling motion in phase space, Fig.~\ref{fig:Imp}.
Second-order sliding motion corresponds to the situation that
the block is touching but not displacing the damper for a sustained period of time.

If a spiralling orbit remains near an attracting invisible-invisible region (see Definition \ref{df:attrRep}),
then it converges to the invisible-invisible region.
By Theorems \ref{th:consistency} and \ref{th:noZeno}, the evolution of such orbits
tends to second-order sliding motion as $t \to \infty$.
Since the absence of sliding regions is a natural consequence of physical assumptions,
we anticipate that this behaviour can be found in models of diverse areas of application.

One could in future analyse $r^{\rm th}$-order Filippov systems
characterised by having continuous $j^{\rm th}$ Lie derivatives for all $j < r$.
However, in view of theoretical control theory for piecewise-linear systems \cite{FrLe02},
we expect that $r^{\rm th}$-order sliding motion on a codimension-$r$ subset of a discontinuity surface
is always unstable in the sense that from any point on the subset there exists
a Filippov solution that immediately departs the subset.

This paper is intended as an essential step toward a general theory of refracting systems.
We have characterised the stability of equilibria of the second-order sliding vector field $f^T$ through the quantity $\Lambda$ \eqref{eq:Lambda},
but it remains to characterise the stability of other invariant sets of $f^T$.
For example, if a periodic orbit of $f^T$ is asymptotically stable in regards to second-order sliding motion,
we expect that nearby spiralling orbits converge to it if the average value of $\Lambda$ over the periodic orbit is negative.

Also it remains to analyse boundary-equilibrium bifurcations of refracting systems.
For a generic Filippov system the collision of an equilibrium with a discontinuity surface
is always associated with a pseudo-equilibrium, and the two equilibria are either admissible
on the same side of the bifurcation (nonsmooth fold) or on opposite sides of the bifurcation (persistence).
We expect this occurs analogously for refracting systems,
but involving instead second-order pseudo-equilibria.
In the case of the ant colony migration model of \S\ref{sec:ants},
we expect such bifurcations are responsible for the existence of the second-order pseudo-equilibria $x^T$ of Fig.~\ref{fig:Ant}.

\section*{Acknowledgements}

This work was supported by Marsden Fund contract MAU2504 managed by Royal Society Te Ap\={a}rangi.
The author thanks Douglas Novaes and Tiago Siller for
bringing to their attention several references on refracting systems.

\appendix

\section{A derivation of \eqref{eq:dVR}}
\label{app:dVR}

Here we show that the identity $\nabla V(x)^{\sf T} f^R(x) = \cL_{f^R}^2 H(x)$, given above as \eqref{eq:dVR},
is valid for any $x \in \cT$.
Since $\cL_{f^L} H(x) = \cL_{f^R} H(x)$ for all $x \in \Omega$ for which $H(x) = 0$,
there exists a scalar field $\alpha$ such that $\cL_{f^L} H(x) = \cL_{f^R} H(x) + \alpha(x) H(x)$ for all $x \in \Omega$.
Then since $V(x) = \cL_{f^L} H(x)$,
\begin{align*}
\nabla V(x)^{\sf T} f^R(x) &= \left( \nabla \cL_{f^R} H(x) \right)^{\sf T} f^R(x)
+ \alpha(x) \nabla H(x)^{\sf T} f^R(x)
+ H(x) \nabla \alpha(x)^{\sf T} f^R(x),
\end{align*}
and by \eqref{eq:LieDeriv2} this can be written as
\begin{equation}
\nabla V(x)^{\sf T} f^R(x) = \cL_{f^R}^2 H(x) + \alpha(x) \cL_{f^R} H(x) + H(x) \cL_{f^R} \alpha(x).
\nonumber
\end{equation}
But $H(x) = 0$ and $\cL_{f^R} H(x) = 0$ on $\cT$, thus $\nabla V(x)^{\sf T} f^R(x) = \cL_{f^R}^2 H(x)$.

\section{Calculations for crossing dynamics}
\label{app:crossing}

Here we prove Theorem \ref{th:P} by composing asymptotic formulas associated with the maps $P_L$ and $P_R$.
These formulas are given in Lemmas \ref{le:PR} and Lemma \ref{le:PL}.
Lemma \ref{le:PR} is proved below; Lemma \ref{le:PL} is identical with $L$'s in place of $R$'s,
so follows trivially from Lemma \ref{le:PR}.

\begin{lemma}
Consider a piecewise-$C^3$ system \eqref{eq:f}
and let $S \subset \cT_R$ be a compact set of invisible folds of $f^R$.
There exists a neighbourhood $\cN \subset \Omega$ of $S$
such that $P_R$ is well-defined for all $x \in \cN \cap \Sigma$ with $\cL_{f^R} H(x) = \nu > 0$, and
\begin{align}
\tau_R(x) &= -\frac{2}{\cL_{f^R}^2 H(x)} \,\nu + \cO \left( \nu^2 \right), \label{eq:TR} \\
P_R(x) &= x - \frac{2 f^R(x)}{\cL_{f^R}^2 H(x)} \,\nu + \cO \left( \nu^2 \right), \label{eq:PR} \\
\left( \cL_{f^R} H \right)(P_R(x)) &= -\nu + \frac{2 \cL_{f^R}^3 H(x)}{3 \big( \cL_{f^R}^2 H(x) \big)^2} \,\nu^2 + \cO \left( \nu^3 \right). \label{eq:VPR}
\end{align}
\label{le:PR}
\end{lemma}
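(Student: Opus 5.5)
We will Taylor-expand the flow of $f^R$ from $x$ in time and solve for the return time to $\Sigma$. Let $\varphi_t$ be the flow of $\dot{x}=f^R(x)$ and write $h(t)=H(\varphi_t(x))$. Since $f^R$ is $C^3$ and $H$ is $C^4$, $h$ is $C^4$ in $t$. By \eqref{eq:LieDeriv}, $h(t)=\nu t+\tfrac12 a_2 t^2+\tfrac16 a_3 t^3+\cO(t^4)$, where $a_m=\cL^m_{f^R}H(x)$. The remainder is uniform over a compact neighbourhood of $S$.

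Near $S$ we have $a_2<0$ bounded away from zero, by invisibility and compactness. Dividing by $t$, the return time solves $g(t,\nu,x):=\nu+\tfrac12 a_2 t+\tfrac16 a_3 t^2+\cO(t^3)=0$. Here $\nu=\cL_{f^R}H(x)$ is treated as a variable alongside $x$. At $\nu=0$ the root is $t=0$ with $\partial_t g=\tfrac12 a_2\neq0$. The implicit function theorem therefore gives a unique small root $t=\tau_R(x)$, and it is positive when $\nu>0$. This works for $x$ in a neighbourhood $\cN$ of $S$ with $H(x)=0$ and small $\nu>0$. Shrinking $\cN$ ensures that small $\nu$ is automatic, since $\nu=0$ on $S$. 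We also need the orbit not to meet $\Sigma$ earlier. This holds because $h>0$ on $(0,\tau_R)$ by the factorisation $h=t\,g$.

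Next we solve by iteration. The ansatz $t=c_1\nu+c_2\nu^2+\cO(\nu^3)$ gives $c_1=-2/a_2$ at first order, which is \eqref{eq:TR}. At second order, $\tfrac12a_2c_2+\tfrac16a_3c_1^2=0$, so $c_2=-\tfrac{4a_3}{3a_2^3}$. Then $P_R(x)=\varphi_{\tau_R}(x)=x+f^R(x)\tau_R+\cO(\tau_R^2)$ gives \eqref{eq:PR}.

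For \eqref{eq:VPR}, note that $\cL_{f^R}H(\varphi_t(x))=h'(t)=\nu+a_2t+\tfrac12a_3t^2+\cO(t^3)$. Substituting the two-term expansion of $\tau_R$ gives
$$\nu+a_2(c_1\nu+c_2\nu^2)+\tfrac12a_3c_1^2\nu^2=-\nu+\Big(-\tfrac{4a_3}{3a_2^2}+\tfrac{2a_3}{a_2^2}\Big)\nu^2,$$
which equals $-\nu+\tfrac{2a_3}{3a_2^2}\nu^2$ as claimed.

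The main obstacle is uniformity. We must ensure that the $\cO(\nu^2)$ and $\cO(\nu^3)$ remainders are uniform in $x$ near the compact set $S$, which needs the $C^3$ bounds on $f^R$ to control $t^4$ remainders of $h$ and $t^3$ remainders of $h'$. We also need the implicit-function neighbourhood to be uniform. Both follow from compactness of $S$ together with continuity of the derivatives on a slightly larger compact neighbourhood.

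A point of care: with only $C^3$ regularity, $h$ is only $C^4$. The expansion of $g=h/t$ then has $\cO(t^3)$ remainder via the integral form of Taylor's theorem, which is enough for the second-order solution of $\tau_R$.
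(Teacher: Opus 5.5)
Your proposal is correct and follows essentially the same route as the paper. Both arguments Taylor-expand $H(\varphi_t(x))$ to third order, divide by $t$, and apply the implicit function theorem to obtain $\tau_R$ with coefficients $-2/\cL^2_{f^R}H$ and $-4\cL^3_{f^R}H/(3(\cL^2_{f^R}H)^3)$, then substitute into $\varphi_t(x)=x+tf^R(x)+\cO(t^2)$ and into the differentiated expansion. Your extra remarks on positivity of $\tau_R$, on there being no earlier return to $\Sigma$, and on uniformity over the compact set $S$ fill in details that the paper leaves implicit.
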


\begin{lemma}
Consider a piecewise-$C^3$ system \eqref{eq:f}
and let $S \subset \cT_L$ be a compact set of invisible folds of $f^L$.
There exists a neighbourhood $\cN \subset \Omega$ of $S$
such that $P_L$ is well-defined for all $x \in \cN \cap \Sigma$ with $\cL_{f^L} H(x) = \nu < 0$, and
\begin{align}
\tau_L(x) &= -\frac{2}{\cL_{f^R}^2 H(x)} \,\nu + \cO \left( \nu^2 \right), \label{eq:TL} \\
P_L(x) &= x - \frac{2 f^L(x)}{\cL_{f^L}^2 H(x)} \,\nu + \cO \left( \nu^2 \right), \label{eq:PL} \\
\left( \cL_{f^L} H \right)(P_L(x)) &= -\nu + \frac{2 \cL_{f^L}^3 H(x)}{3 \big( \cL_{f^L}^2 H(x) \big)^2} \,\nu^2 + \cO \left( \nu^3 \right). \label{eq:VPL}
\end{align}
\label{le:PL}
\end{lemma}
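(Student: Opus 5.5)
The plan is to deduce Lemma \ref{le:PL} from Lemma \ref{le:PR} by a symmetry of the two-piece form \eqref{eq:f}, rather than repeating the asymptotic calculation. Given the piecewise-$C^3$ system \eqref{eq:f} with data $(f^L, f^R, H)$, I will introduce the auxiliary system with data
\begin{equation}
\hat{f}^L = f^R, \qquad \hat{f}^R = f^L, \qquad \hat{H} = -H .
\nonumber
\end{equation}
This system is again piecewise-$C^3$ on $\Omega$, because $\nabla \hat{H} = -\nabla H \ne \bO$ on $\Sigma$. It has the same discontinuity surface $\Sigma$, and its region $\hat{H} > 0$ coincides with $H < 0$, where $\hat{f}^R = f^L$ is applied. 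Therefore the forward orbit of $x \in \Sigma$ under $\hat{f}^R$, up to its next return to $\Sigma$, is exactly the orbit used to define $P_L(x)$ and $\tau_L(x)$. Hence $\hat{P}_R = P_L$ and $\hat{\tau}_R = \tau_L$, wherever either side is defined.

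Next I translate the Lie derivatives. Since $\hat{H} = -H$ and Lie derivatives are linear in the scalar function (see \eqref{eq:LieDeriv}), we have
\begin{equation}
\cL_{\hat{f}^R}^m \hat{H} = -\cL_{f^L}^m H, \qquad m = 1,2,3 .
\nonumber
\end{equation}
Consequently $\hat{\cT}_R = \cT_L$. Moreover, an invisible fold of $f^L$, characterised by $\cL_{f^L}^2 H > 0$, is exactly an invisible fold of $\hat{f}^R$ in the sense of Definition \ref{df:fold}, characterised by $\cL_{\hat{f}^R}^2 \hat{H} < 0$. The condition $\cL_{f^L} H(x) = \nu < 0$ becomes $\cL_{\hat{f}^R} \hat{H}(x) = \hat{\nu} > 0$ with $\hat{\nu} = -\nu$. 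Thus Lemma \ref{le:PR}, applied to the auxiliary system and the same compact set $S$, supplies a neighbourhood $\cN$ on which $P_L$ is well-defined. It also supplies the expansions \eqref{eq:TR}--\eqref{eq:VPR} in hatted quantities.

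Finally I substitute back. The formula $\hat{\tau}_R = -2\hat{\nu}/\cL^2_{\hat{f}^R}\hat{H} + \cO(\hat{\nu}^2)$ becomes
\begin{equation}
\tau_L(x) = -\frac{2\nu}{\cL_{f^L}^2 H(x)} + \cO\left(\nu^2\right).
\nonumber
\end{equation}
Here the denominator is $\cL_{f^L}^2 H$; the $\cL_{f^R}^2 H$ printed in \eqref{eq:TL} should be read as this, and indeed only this makes $\tau_L > 0$. The expansion of $P_R$ gives $P_L(x) = x - 2 f^L(x)\,\nu/\cL^2_{f^L}H(x) + \cO(\nu^2)$, since the two sign changes cancel. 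For \eqref{eq:VPL}, the hatted version gives
\begin{equation}
-\cL_{f^L}H(P_L(x)) = \nu - \frac{2\cL^3_{f^L}H(x)}{3\big(\cL^2_{f^L}H(x)\big)^2}\,\nu^2 + \cO\left(\nu^3\right),
\nonumber
\end{equation}
because $\hat{\nu}^2 = \nu^2$, the cube term flips sign, and the squared second derivative does not. Multiplying by $-1$ yields \eqref{eq:VPL}.

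The only step needing care is the sign bookkeeping, in particular confirming that the invisibility conventions of Definition \ref{df:fold} for $L$ and $R$ are exchanged correctly under $H \mapsto -H$. No analytic obstacle is expected beyond Lemma \ref{le:PR} itself.
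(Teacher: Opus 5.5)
Your proposal is correct and is essentially the paper's route: the paper only says that Lemma \ref{le:PL} follows from Lemma \ref{le:PR} by putting $L$'s in place of $R$'s, and your relabelling $(f^L,f^R,H)\mapsto(f^R,f^L,-H)$ makes that reduction precise, with all the sign bookkeeping ($\hat{\cT}_R=\cT_L$, the exchange of invisibility conventions, $\hat\nu=-\nu$, and the cancelling signs in $\tau_L$ and $P_L$) done correctly. You are also right that the denominator in \eqref{eq:TL} should be $\cL_{f^L}^2 H(x)$ rather than $\cL_{f^R}^2 H(x)$: this makes $\tau_L>0$, and it is what allows \eqref{eq:TR} and \eqref{eq:TL} to sum to \eqref{eq:T} in the proof of Theorem \ref{th:P}, whereas with the printed version the $\nu$-terms would cancel.
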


\begin{proof}[Lemma \ref{le:PR}]
Let $\varphi_t(x)$ denote the flow induced by $f^R$.
For any $x \in \Sigma$, we have $H(x) = 0$,
thus, by the definition \eqref{eq:LieDeriv} of a Lie derivative,
\begin{equation}
H \left( \varphi_t(x) \right) = \nu t + \frac{1}{2} \cL^2_{f^R} H(x) t^2 + \frac{1}{6} \cL^3_{f^R} H(x) t^3 + \cO \left( t^4 \right),
\label{eq:PLproof1}
\end{equation}
where $\nu = \cL_{f^R} H(x)$.
The error term in \eqref{eq:PLproof1} is justified for the following reason.
The vector field $f^R$ is $C^3$, so the flow $\varphi_t(x)$ is $C^3$ with respect to $x$ and $t$,
but also four times continuously differentiable with respect to $t$
because $\dot{\varphi}_t(x) = f^R \left( \varphi_t(x) \right)$ is $C^3$, see \cite{Ha02,Me17}.

Since $H \left( \varphi_0(x) \right) = H(x) = 0$ on $\Sigma$, the function
$J(t,x) = \frac{H \left( \varphi_t(x) \right)}{t}$ admits a $C^3$ extension
to a neighbourhood of $t = 0$.
Moreover, $J(0,x) = 0$ and $\frac{\partial J}{\partial t}(0,x) = \frac{1}{2} \cL^2_{f^R} H(x) \ne 0$ for all $x \in S$,
so by the implicit function theorem there exists a unique locally-defined $C^3$ function $\tau_R(x)$ such that
$J \left( \tau_R(x), x \right) = 0$.
By \eqref{eq:PLproof1} we obtain
\begin{equation}
\tau_R(x) = -\frac{2}{\cL^2_{f^R} H(x)} \,\nu - \frac{4 \cL^3_{f^R} H(x)}{3 \cL^2_{f^R} H(x)^3} \,\nu^2
+ \cO \left( \nu^3 \right),
\label{eq:PLproof10}
\end{equation}
which verifies \eqref{eq:TR}.
By substituting \eqref{eq:PLproof10}
into $\varphi_t(x) = x + t f^R(x) + \cO \left( t^2 \right)$, we obtain \eqref{eq:PR}.
To evaluate
$\left( \cL_{f^R} H \right)(P_R(x)) = \frac{\partial}{\partial t} H \left( \varphi_t(x) \right) \big|_{t = \tau_R(x)}$,
we differentiate \eqref{eq:PLproof1}, the result being
\begin{equation}
\left( \cL_{f^R} H \right)(P_R(x)) = \nu + \cL^2_{f^R} H(x) \tau_R(x)
+ \frac{1}{2} \cL^3_{f^R} H(x) \tau_R(x)^2 + \cO \left( \nu^3 \right).
\nonumber
\end{equation}
Into this we substitute \eqref{eq:PLproof10}, yielding \eqref{eq:VPR}.
\end{proof}

\begin{proof}[Theorem \ref{th:P}]
For the formulas \eqref{eq:TL}--\eqref{eq:VPL} we use \eqref{eq:VPR} in place of $\nu$,
because this is the value of $V$ at the start of the orbit that follows $f^L$,
while the formulas \eqref{eq:TR}--\eqref{eq:VPR} are each used as is.
By summing \eqref{eq:TR} and \eqref{eq:TL} we obtain \eqref{eq:T},
by composing \eqref{eq:PR} and \eqref{eq:PL} we obtain \eqref{eq:P},
and by evaluating \eqref{eq:VPL} with \eqref{eq:VPR} we obtain \eqref{eq:VP}.
\end{proof}

\section{Proof of Theorem \ref{th:consistency}}
\label{app:consistency}

\begin{proof}[Theorem \ref{th:consistency}]
Let
\begin{equation}
\Sigma^+ = \left\{ x \in \Sigma \,\middle|\, V(x) > 0 \right\},
\label{eq:SigmaPlus}
\end{equation}
and write $B_r(z)$ for the open ball of radius $r > 0$ centred at $z \in \mathbb{R}^n$.
By Theorem \ref{th:P}, we have for $x \in \Sigma^+$ near $S$
\begin{align*}
\tau(x) &= \beta(x) V(x) + \cO \left( V(x)^2 \right), \\
P(x) &= x + \beta(x) f^T(x) V(x) + \cO \left( V(x)^2 \right), \\
V(P(x)) &= V(x) + \cO \left( V(x)^2 \right),
\end{align*}
where $\beta$ is given by \eqref{eq:beta}.
Thus there exist $\beta_0, \beta_1, \delta_1, M_1 > 0$ such that
for all $x$ belonging to the set
\begin{equation}
\Sigma^+_{\delta_1} = \left\{ x \in \Sigma^+ \,\middle|\, \text{$\| x - y \| < \delta_1$ for some $y \in S$} \right\},
\nonumber
\end{equation}
$P(x)$ is well-defined and
\begin{align}
\beta_0 V(x) &\le \tau(x) \le \beta_1 V(x), \label{eq:prop1} \\
\left| \tau(x) - \beta(x) V(x) \right| &\le M_1 V(x)^2, \label{eq:prop2} \\
\left\| P(x) - \left( x + f^T(x) \beta(x) V(x) \right) \right\| &\le M_1 V(x)^2, \label{eq:prop3} \\
\left| V(P(x)) - V(x) \right| &\le M_1 V(x)^2, \label{eq:prop4} \\
\left\| \nabla V(x) \right\| &\le M_1 \,. \label{eq:LipV}
\end{align}
Let $M_2 > 0$ be such that $\| f^T \| \le M_2$ on $S$
and $\| f^L \|, \| f^R \| \le M_2$ along the orbit of \eqref{eq:f} from any $x \in \Sigma^+_{\delta_1}$ to $P(x)$.
Let $K_1 = M_1 (1 + M_2) + \beta_1^2 M_3$.

Since $f^T$ is $C^2$, there exists $M_3 \ge \frac{M_1}{\beta_0}$ such that for all $y \in S$
\begin{equation}
\left\| \rD f^T(y) \right\| \le M_3 \,,
\label{eq:LipfT}
\end{equation}
and the forward orbit $\xi(t)$ of $y$ under $f^T$ obeys
\begin{equation}
\left\| \xi(t + \Delta t) - \left( \xi(t) + f^T(\xi(t)) \Delta t \right) \right\| \le M_3 (\Delta t)^2,
\label{eq:prop9}
\end{equation}
for all $0 \le \Delta t \le \beta_1 \delta_1$ and $0 \le t \le b - \Delta t$.
To complete the proof we show \eqref{eq:bound} holds for $x \in \Sigma^+$ with
\begin{align}
\delta &= \frac{\delta_1}{1 + \frac{b K_1 M_1}{\beta_0}} \,\re^{-M_3 b}, \label{eq:delta} \\
M &= \left( 1 + \tfrac{b K_1 M_1}{\beta_0} + 2 \beta_1 M_1 M_2 \right) \re^{M_3 b}. \label{eq:M}
\end{align}
The result is easily extended to points $x \in \Omega \setminus \cT$ with $x \notin \Sigma^+$
by decreasing the value of $\delta$ if necessary, because the forward orbits of such points quickly reach $\Sigma^+$.

Choose any $y \in S_0$ and $x \in \Sigma^+$ with $\| x - y \| < \delta$.
Let $\phi(t)$ be the forward orbit of $x$ under \eqref{eq:f},
and $\xi(t)$ be the forward orbit of $y$ under $f^T$.
Let $0 = t_0 < t_1 < \cdots < t_N$ be the times at which $\phi(t) \in \Sigma^+$,
with $N$ such that $t_{N-1} < b \le t_N$.
Let $x^{(j)} = \phi(t_j)$ be the corresponding points on $\Sigma^+$.
Notice $x^{(j+1)} = P \left( x^{(j)} \right)$
and $t_{j+1} = t_j + \tau \left( x^{(j)} \right)$
for all $j = 0,1,\ldots,N-1$.

Write $E(t) = \left\| \phi(t) - \xi(t) \right\|$.
We now show
\begin{align}
x^{(j)} &\in \Sigma^+_{\delta_1}, \label{eq:induct1} \\
V \left( x^{(j)} \right) &\le M_1 \re^{\frac{M_1 t_j}{\beta_0}} \| x - y \|, \label{eq:induct2} \\
E(t_j) &\le \left( 1 + \tfrac{K_1 M_1 t_j}{\beta_0} \right) \re^{M_3 t_j} \| x - y \|, \label{eq:induct3}
\end{align}
for all $j = 0,1,\ldots,N-1$.
This is achieved by induction on $j$.
The result is true for $j = 0$ because $\delta < \delta_1$, so \eqref{eq:induct1} holds,
and then $V \left( x^{(0)} \right) = V(x) \le M_1 \| x - y \|$ by \eqref{eq:LipV}, so \eqref{eq:induct2} holds,
and finally $E(t_0) = E(0) = \| x - y \|$, so \eqref{eq:induct3} holds with equality.

Suppose \eqref{eq:induct1}--\eqref{eq:induct3} hold for some $j = 0,1,\ldots,N-2$.
By \eqref{eq:prop4} and \eqref{eq:induct2}
\begin{align}
V \left( x^{(j+1)} \right) &\le V \left( x^{(j)} \right) + M_1 V \left( x^{(j)} \right)^2 \nonumber \\
&\le \left( 1 + M_1 V \left( x^{(j)} \right) \right) M_1 \re^{\frac{M_1 t_j}{\beta_0}} \| x - y \|.
\label{eq:VboundMid}
\end{align}
Observe
\begin{equation}
1 + M_1 V \left( x^{(j)} \right)
\le \re^{M_1 V \left( x^{(j)} \right)}
\le \re^{\frac{M_1}{\beta_0} \tau \left( x^{(j)} \right)},
\nonumber
\end{equation}
using \eqref{eq:prop1}.
So since $\tau \left( x^{(j)} \right) = t_{j+1} - t_j$,
\eqref{eq:VboundMid} implies \eqref{eq:induct2} with $j+1$.

To verify \eqref{eq:induct3} with $j+1$, observe
\begin{align}
E(t_{j+1}) &= \left\| P \left( x^{(j)} \right) - \xi(t_{j+1}) \right\| \nonumber \\
&\le \left\| P \left( x^{(j)} \right) - \Big( x^{(j)}
+ \beta \left( x^{(j)} \right) f^T \left( x^{(j)} \right) V \left( x^{(j)} \right) \Big) \right\| \nonumber \\
&\quad+ \left\| x^{(j)} + \beta \left( x^{(j)} \right) f^T \left( x^{(j)} \right) V \left( x^{(j)} \right)
- \Big( \xi(t_j) + f^T \left( \xi(t_j) \right) \tau \left( x^{(j)} \right) \Big) \right\| \nonumber \\
&\quad+ \left\| \xi(t_j) + f^T \left( \xi(t_j) \right) \tau \left( x^{(j)} \right) - \xi(t_{j+1}) \right\|.
\label{eq:conProof30}
\end{align}
By \eqref{eq:prop3}, the first term in \eqref{eq:conProof30}
is bounded by $M_1 V \left( x^{(j)} \right)^2$.
By \eqref{eq:prop1}, $\tau \left( x^{(j)} \right) \le \beta_1 V \left( x^{(j)} \right)$,
and so by \eqref{eq:prop9} the third term in \eqref{eq:conProof30}
is bounded by $\beta_1^2 M_3 V \left( x^{(j)} \right)^2$.
The middle term in \eqref{eq:conProof30} is bounded by
\begin{align}
& E(t_j) + \left\| f^T \left( x^{(j)} \right) \right\|
\left\| \beta \left( x^{(j)} \right) V \left( x^{(j)} \right) - \tau \left( x^{(j)} \right) \right\|
+ \left\| f^T \left( x^{(j)} \right) - f^T \left( \xi(t_j) \right) \right\| \left| \tau \left( x^{(j)} \right) \right| \nonumber \\
&\le E(t_j) + M_1 M_2 V \left( x^{(j)} \right)^2 + M_3 (t_{j+1} - t_j) E(t_j),
\end{align}
where we have used \eqref{eq:prop2}, \eqref{eq:LipfT}, the bound on $\| f^T \|$,
and substituted $\tau \left( x^{(j)} \right) = t_{j+1} - t_j$.
By inserting these bounds into \eqref{eq:conProof30}, we obtain
\begin{equation}
E(t_{j+1}) \le \left( 1 + M_3 (t_{j+1} - t_j) \right) E(t_j) + K_1 V \left( x^{(j)} \right)^2,
\label{eq:conProof40}
\end{equation}
using also the definition of $K_1$.
Into \eqref{eq:conProof40} we substitute
$1 + M_3 (t_{j+1} - t_j) \le \re^{M_3 (t_{j+1} - t_j)}$,
and $V \left( x^{(j)} \right) \le \frac{1}{\beta_0} \tau \left( x^{(j)} \right) = \frac{t_{j+1} - t_j}{\beta_0}$,
which uses \eqref{eq:prop1}.
We also substitute \eqref{eq:induct2}, resulting in
\begin{equation}
E(t_{j+1}) \le \re^{M_3 (t_{j+1} - t_j)} E(t_j) + \tfrac{K_1 M_1 (t_{j+1} - t_j)}{\beta_0}
\,\re^{\frac{M_1 t_j}{\beta_0}} \| x - y \|.
\nonumber
\end{equation}
By then inserting \eqref{eq:induct3}
and the bounds $M_3 \ge \frac{M_1}{\beta_0}$ and $t_{j+1} > t_j$ in the exponent,
we obtain \eqref{eq:induct3} with $j+1$.
This implies $E(t_{j+1}) < \delta_1$, due to the choice \eqref{eq:delta} and because $t_{j+1} < b$,
thus \eqref{eq:induct1} holds with $j+1$.
This completes our inductive verification of \eqref{eq:induct1}--\eqref{eq:induct3} for all $j = 0,1,\ldots,N-1$.

Finally, choose any $t \in [0,b]$,
and let $j \in \{ 0,1,\ldots,N-1 \}$ be such that $t_j \le t < t_{j+1}$.
By \eqref{eq:prop1} and the bound on $\| f^L \|$, $\| f^R \|$, and $\| f^T \|$,
\begin{align}
E(t) &= \left\| \phi(t) - \xi(t) \right\| \nonumber \\
&\le \left\| \phi(t) - \phi(t_j) \right\|
+ \left\| \phi(t_j) - \xi(t_j) \right\|
+ \left\| \xi(t_j) - \xi(t) \right\| \nonumber \\
&\le \beta_1 M_2 V \left( x^{(j)} \right) + E(t_j) + \beta_1 M_2 V \left( x^{(j)} \right).
\nonumber
\end{align}
Then by \eqref{eq:induct2} and \eqref{eq:induct3}
\begin{align}
E(t) &\le \left( 1 + \frac{K_1 M_1 t_j}{\beta_0} \right) \re^{M_3 t_j} \| x - y \|
+ 2 \beta_1 M_1 M_2 \re^{\frac{M_1 t_j}{\beta_0}} \| x - y \| \nonumber \\
&\le \left( 1 + \frac{b K_1 M_1}{\beta_0} + 2 \beta_1 M_1 M_2 \right) \re^{M_3 b} \| x - y \|
\nonumber
\end{align}
using $M_3 \ge \frac{M_1}{\beta_0}$ and $t_j \le b$ for the second inequality.
By \eqref{eq:M} this verifies \eqref{eq:bound} as required.
\end{proof}

\section{Lyapunov stability}
\label{app:LyapStable}

Here write $\varphi_t(x)$ for the flow induced by \eqref{eq:f} on $\Omega \setminus \cT$,
and $\psi_t(y)$ for the flow induced by $f^T$ on $\cT$.
As above, $\Sigma^+$ denotes the set \eqref{eq:SigmaPlus}, and $B_r(z)$ is an open ball.

\begin{lemma}
For \eqref{eq:f} satisfying the assumptions of case (1) of Theorem \ref{th:stab2}, $x^*$ is Lyapunov stable.
\label{le:noEscape}
\end{lemma}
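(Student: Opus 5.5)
Fix a neighbourhood $\cN_1$ of $x^*$; we must find $\cN_2 \subset \cN_1$ such that every Filippov solution starting in $\cN_2$ stays in $\cN_1$ for all $t \ge 0$. The plan combines two ingredients: the exponential stability of $\tilde{x}^*$ for $\tilde{f}^T$, which moves orbits along $\cT$, and $\Lambda(x^*) < 0$, which controls the distance from $\cT$ via $V$. First shrink everything so that a compact neighbourhood $S$ of $x^*$ in $\cT$ is attracting invisible-invisible and the estimates of Theorem~\ref{th:P} hold. On a neighbourhood of $x^*$ we then have $V(P(x)) \le V(x) - aV(x)^2 \le V(x)$ for $x \in \Sigma^+$. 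Consequently $V$ is nonincreasing along successive returns to $\Sigma^+$, and within one revolution the excursion from $\Sigma^+$ is $\cO(V)$ by \eqref{eq:P}.

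\textbf{Step 1: a Lyapunov-type sublevel set for $f^T$.} By the eigenvalue assumption, pick a quadratic Lyapunov function $W$ for $\tilde{f}^T$ on $\cT$ near $x^*$. Take $r>0$ small and let $D_r = \{y \in \cT : W(y) \le r\}$, chosen so that $D_{2r}$ is well inside $\cN_1 \cap S$. Since $f^T$ points strictly inward on $\partial D_r$, there is a time $b>0$ such that every $f^T$ orbit from $D_{2r}$ enters $D_{r/2}$ by time $b$ and never leaves $D_{2r}$ on $[0,b]$. Also fix a constant $c$ such that $c\|e\| \ge \mathrm{dist}(x, D_r)$-type errors are absorbed; that is, choose $\eta>0$ with the $\eta$-neighbourhood of $D_{r}$ inside the $D_{2r}$-region (measured via a projection $\pi$ onto $\cT$).

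\textbf{Step 2: iterate Theorem~\ref{th:consistency} on windows of length $b$.} Apply Theorem~\ref{th:consistency} with $S_0 = D_{2r}$ to obtain $\delta, M$. Given $x$ near $x^*$, set $y = \pi(x)$, so that $\|x-y\| \lesssim |V(x)| + |H(x)|$. Over $[0,b]$ we get $\|\phi(t)-\xi(t)\| \le M\|x-y\|$, with $\xi$ staying in $D_{2r} \subset \cN_1$. At time $b$, $\phi(b)$ lies within $M\|x-y\|$ of $D_{r/2}$; hence $\pi(\phi(b)) \in D_r$ provided $M\|x-y\|$ is small compared with the gap between $D_{r/2}$ and $\partial D_r$. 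Crucially, the distance of $\phi(b)$ from $\cT$ is controlled not by the growing bound $M\|x-y\|$ but by $V$ at the latest return to $\Sigma^+$. Since $V$ is monotone nonincreasing, $\|\phi(b) - \pi(\phi(b))\| \le C\,V(x)$ with $C$ independent of the window. We can therefore restart at $\phi(b)$ with $y' = \pi(\phi(b)) \in D_r$ and the same smallness $CV(x)$, and induct over windows $[kb,(k+1)b]$. This gives $\phi(t) \in \cN_1$ for all $t$ whenever $x$ is within a small enough $\cN_2$. For initial points not on $\Sigma^+$, or lying on $\cT$ itself, we handle the short initial arc directly: on $\cT$ the Filippov solutions are sliding, because both folds are invisible and no orbit can leave.

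\textbf{Main obstacle.} The difficulty is preventing error accumulation across windows, since Theorem~\ref{th:consistency} gives only a finite-time linear bound. The resolution planned above is to measure the error in two components. The normal component, distance to $\cT$, is governed monotonically by $V$ via \eqref{eq:VP} with $\Lambda<0$. The tangential component is reset each window by the strict contraction of $D_{2r}$ into $D_{r/2}$, which absorbs the $M C V(x)$ error. Some care is also needed to make the projection $\pi$ and the constant $C$ (relating $\|z-\pi(z)\|$ to $V$ at nearby $\Sigma^+$ points, using \eqref{eq:rankTwoCombined}) uniform.
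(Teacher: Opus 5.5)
Your proposal is correct and follows essentially the paper's argument: apply Theorem~\ref{th:consistency} on successive windows of length $b$, keep the distance to $\cT$ uniformly $\cO(V)$ through the monotonicity $V(P(x)) \le V(x)$ that comes from $\Lambda(x^*) < 0$, and let the contraction of the $f^T$ flow over each window absorb the tangential error so that it does not accumulate. The differences are cosmetic. The paper uses the exponential estimate $\| \psi_t(y) - x^* \| \le \alpha_1 \re^{-\beta_1 t} \| y - x^* \|$ with $\alpha_1 \re^{-\beta_1 b/2} = \frac{1}{4}$ in place of your Lyapunov sublevel sets, and it restarts each window at a return to $\Sigma^+$ in $[\frac{b}{2}, b]$ (using $\tau \le 2\beta(x^*) V$ and a point $y \in \cT$ with $\| x - y \| \le \frac{2}{K} V(x)$) rather than at the fixed time $b$ via a projection onto $\cT$.
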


\begin{proof}
Choose any $\ee > 0$.
We need to show there exists $\delta > 0$ such that the forward orbit of any $x \in B_\delta(x^*)$ does not escape $B_\ee(x^*)$.
If $x \in \cT$, then evolution is under $f^T$
and stays near $x^*$ because all eigenvalues of $\rD \tilde{f}^T(\tilde{x}^*)$ have negative real-part,
so it suffices to construct $\delta$ to handle $x \notin \cT$.

By the eigenvalue assumption, $x^*$ is exponentially stable for $f^T$.
That is, there exist $\alpha_1 \ge 1$, $\beta_1 > 0$, $\delta_1 > 0$ such that
if $y \in B_{\delta_1}(x^*) \cap \cT$, then
\begin{equation}
\| \psi_t(y) - x^* \| \le \alpha_1 \re^{-\beta_1 t} \| y - x^* \|, \qquad \text{for all $t \ge 0$}.
\label{eq:noEscapeProof1}
\end{equation}
Since $x^*$ belongs to an attracting invisible-invisible region, we have
$\beta(x^*) > 0$ and $\Lambda(x^*) < 0$, so by \eqref{eq:T} and \eqref{eq:VP}
we can assume $\delta_1 > 0$ has been chosen small enough that
\begin{align}
\tau(x) &\le 2 \beta(x^*) V(x), \label{eq:noEscapeProof2a} \\
V(P(x)) &\le V(x), \label{eq:noEscapeProof2b} 
\end{align}
for all $x \in B_{\delta_1 \alpha_1}(x^*) \cap \Sigma^+$.
We can also assume $\delta_1$ has been chosen small enough that
the closure of $B_{\delta_1 \alpha_1}(x^*) \cap \cT$, call it $S$, is an invisible-invisible region.
Let $S_0 = B_{\delta_1}(x^*) \cap \cT$
and observe from \eqref{eq:noEscapeProof1} that $\psi_t(y) \in S$ for all $y \in S_0$ and $t \ge 0$.
Let $b > 0$ be such that
\begin{equation}
\alpha_1 \re^{-\frac{\beta_1 b}{2}} = \tfrac{1}{4}.
\label{eq:noEscapeProof3}
\end{equation}
By Theorem \ref{th:consistency} there exists $\delta_2 > 0$ and $M \ge \frac{1}{2}$
such that for all $y \in S_0$ and $x \in \Omega \setminus \cT$ with $\| x - y \| < \delta_2$ we have
\begin{equation}
\| \varphi_t(x) - \psi_t(y) \| \le M \| x - y \|, \qquad \text{for all $0 \le t \le b$}.
\label{eq:noEscapeProof10}
\end{equation}
Let $K = \| \nabla V(x^*) \|$, where $K > 0$ because $\cL_{f^L}^2 H(x^*) \ne 0$.
Since $x^* \in \cT$ and $\cT$ is the set of all $x \in \Sigma$ for which $V(x) = 0$,
there exists $\delta_3 > 0$ such that for all $x \in B_{\delta_3}(x^*) \cap \Sigma^+$ we have
\begin{align}
V(x) &\le 2 K \| x - x^* \|, \label{eq:noEscapeProof13} \\
\| x - y \| &\le \tfrac{2}{K} V(x), \qquad \text{for some $y \in \cT$}. \label{eq:noEscapeProof14}
\end{align}

Assume $\ee \le {\rm min}\left[2 \alpha_1 \delta_1, 8 \alpha_1 M \delta_2, 4 \alpha_1 \delta_3, \frac{4 \alpha_1 M b}{K \beta(x^*)} \right]$.
Let $\delta > 0$ be such that the forward orbit
of any $x \in B_\delta(x^*) \setminus \cT$
first hits $\Sigma^+$ at a point in $B_{\frac{\ee}{32 \alpha_1 M}}(x^*)$.
It remains to show that the forward orbit of any point in
\begin{equation}
U = \left\{ x \in B_{\frac{\ee}{4 \alpha_1}}(x^*) \cap \Sigma^+ \,\middle|\, V(x) \le \tfrac{K \ee}{16 \alpha_1 M} \right\},
\nonumber
\end{equation}
returns to $U$ in a time $\frac{b}{2} \le \tilde{t} \le b$ without escaping $B_\ee(x^*)$.
This is because by \eqref{eq:noEscapeProof13} it implies that the forward orbit of any $x \in B_\delta(x^*) \setminus \cT$
reaches $U$, then by recursion remains in $B_\ee(x^*)$ for all $t \ge 0$.

Choose any $x \in U$.
By \eqref{eq:noEscapeProof14}, there exists $y \in \cT$ with $\| x - y \| \le \frac{\ee}{8 \alpha_1 M}$, and notice
\begin{align}
\| y - x^* \| &\le \| y - x \| + \| x - x^* \| \nonumber \\
&\le \frac{\ee}{8 \alpha_1 M} + \frac{\ee}{4 \alpha_1} \nonumber \\
&\le \frac{\ee}{2 \alpha_1}, \nonumber
\end{align}
since $M \ge \frac{1}{2}$.
By \eqref{eq:noEscapeProof1} and \eqref{eq:noEscapeProof10} we have for all $0 \le t \le b$,
\begin{align}
\left\| \varphi_t(x) - x^* \right\| &\le \| \varphi_t(x) - \psi_t(y) \| + \| \psi_t(y) - x^* \| \nonumber \\
&\le M \times \tfrac{\ee}{8 \alpha_1 M} + \alpha_1 \times \tfrac{\ee}{2 \alpha_1}, \nonumber \\
&\le \ee, \nonumber
\end{align}
since $\alpha_1 \ge 1 > \frac{1}{4}$.
The orbit $\varphi_t(x)$ repeatedly intersects $\Sigma^+$, but always with $V < \tfrac{K \ee}{16 \alpha_1 M}$ by \eqref{eq:noEscapeProof2b}.
By \eqref{eq:noEscapeProof2a}, the time between consecutive intersections is at most $\frac{b}{2}$,
thus there exists $\frac{b}{2} \le \tilde{t} \le b$ such that $\varphi_{\tilde{t}}(x) \in \Sigma^+$.
Then, using also \eqref{eq:noEscapeProof3},
\begin{align}
\left\| \varphi_{\tilde{t}}(x) - x^* \right\| &\le \| \varphi_{\tilde{t}}(x) - \psi_{\tilde{t}}(y) \| + \| \psi_{\tilde{t}}(y) - x^* \| \nonumber \\
&\le M \times \tfrac{\ee}{8 \alpha_1 M} + \tfrac{1}{4} \times \tfrac{\ee}{2 \alpha_1}, \nonumber \\
&\le \tfrac{\ee}{4 \alpha_1}, \nonumber
\end{align}
and so $\varphi_{\tilde{t}}(x) \in U$, as required.
\end{proof}

{\footnotesize
\bibliographystyle{unsrt}
\bibliography{SOSIntro2arXivBib}
}

\end{document}